\newtheorem{theorem}{Theorem}[subsection]
\newtheorem{lemma}[theorem]{Lemma}
\newtheorem{corollary}[theorem]{Corollary}
\newtheorem{proposition}[theorem]{Proposition}
\theoremstyle{definition}
\newtheorem{definition}[theorem]{Definition}
\newtheorem{example}[theorem]{Example}
\newtheorem{question}[theorem]{Question}
\numberwithin{equation}{section}
\numberwithin{theorem}{section}
\newcommand{\diag}{\mathrm{diag}}
\newcommand{\Ad}{\mathrm{Ad}\,}
\newcommand{\cM}{{\cal M}}
\newcommand{\ep}{{\varepsilon}}
\newcommand{\cA}{{A}}
\newcommand{\cU}{{\cal U}}
\newcommand{\cB}{{B}}
\newcommand{\cO}{{\cal O}}
\newcommand{\cD}{{D}}
\newcommand{\C}{{\mathbb C}}
\newcommand{\Z}{{\mathbb Z}}
\newcommand{\T}{\mathbb{T}}
\newcommand{\cK}{{\cal K}}
\newcommand{\Cs}{{$C^*$-al\-ge\-bra}}
\newcommand{\sh}{{$^*$-ho\-mo\-mor\-phism}}
\newcommand{\AvP}{{\rm{(AveP)}}}
\newcommand{\SAvP}{{\rm{(SAveP)}}}
\newcommand{\fin}{\mathrm{fin}}
\newcommand{\Inf}{\mathrm{inf}}
\date{}
\title{A Dixmier type averaging property of automorphisms on a $C^*$-algebra}
\author{Mikael R\o rdam} 
\begin{document}

\maketitle

\begin{abstract} 
\noindent In his study of the relative Dixmier property for inclusions of von Neumann algebras and of \Cs s, Popa considered a certain property of automorphisms on \Cs s, that we here call the \emph{strong averaging property}.  In this note we  characterize when an automorphism on a \Cs{} has the strong averaging property.  In particular,  automorphisms on commutative \Cs s possess this property precisely when they are free. An automorphism on a unital separable simple \Cs{} with at least one tracial state has the strong averaging property precisely when  its extension to the finite part of the bi-dual of the \Cs{} is properly outer, and in the simple, non-tracial case the strong averaging property is equivalent to being outer. 

To illustrate the usefulness of the strong averaging property we give three examples where we can provide  simpler proofs of existing results on crossed product \Cs s, and we are also able to extend these results in different directions.  
\end{abstract}

\section{Introduction} 

\noindent There are several notions of ``non-innerness'' of an automorphism on a \Cs{} (or a von Neumann algebra), most notably outerness, proper outerness and freeness (which on commutative \Cs s is equivalent to proper outerness). These properties of an automorphism or a group action, in turn, facilitate properties of the associated dynamical system and of the crossed product, such as for example simplicity (for \Cs s) and factoriality (for von Neumann algebras). 

We shall  investigate a property of automorphisms considered by Popa in his paper~\cite{Popa:JFA2000} where he studies the relative Dixmier property for an inclusion of \Cs s (or von Neumann algebras), see Definition~\ref{def:AvP}. We call this property the (strong) averaging property of the automorphism as it resembles the Diximier averaging property. Popa proves, via establishing this property, that the inclusion of a simple \Cs{} with the Dixmier property inside its crossed product by an arbitrary discrete group equipped with a suitable outer action on the \Cs{} has the relative Dixmier property, \cite[Corollary 4.1]{Popa:JFA2000}.

The goal of this paper is to characterize which automorphisms enjoy the (strong) averaging property, and to demonstrate its usefulness via some examples, in particular in the direction of simplifying existing proofs.

The characterization of the strong averaging property, here abbreviated \SAvP, is given in terms of residual outerness of the automorphism (where ``residual'' means that the outerness property must hold in all invariant quotients), and in terms of suitable outerness of the extension of the automorphism to the bi-dual of the \Cs. More specifically, we must require that the automorphism is properly outer on the finite part of the bi-dual. We remind the reader that proper outerness of an automorphism on a \Cs{} does not imply that its extension to the bi-dual also is properly outer. However, by a result of Kishimoto, \cite{Kis:Auto}, if the \Cs{} is simple, and the automorphism extends to an \emph{inner} automorphism on its bi-dual, then it is already inner on the \Cs{} itself. 

Part (i) of Theorem~\ref{thm:A} gives a \emph{necessary} condition on an automorphism on a \Cs{} to $A$ have \SAvP, and part (iii) gives a \emph{sufficient} condition for \SAvP. It is shown in Example~\ref{ex:B} that the sufficient condition is not necessary in general. In the other direction, we do not know if the necessary condition of (i) is also  sufficient in general (it probably isn't), but we do show that it is sufficient in three important cases: when the \Cs{} $A$ has stable rank one, Corollary~\ref{cor:B}, when $A$ is commutative, in which case \SAvP{} is equivalent to freeness, Corollary~\ref{cor:C}, and when $A$ is simple, Corollary~\ref{cor:A}. In the latter case \SAvP{} is equivalent to outerness when $A$ has no tracial state, and it is equivalent to proper outerness of the extension of the automorphism to the finite part of the bi-dual, when $A$ does admit a tracial state. 

Section~\ref{sec:applications} of the paper contains three applications of the strong averaging property of automorphisms, or rather of actions of a discrete group on a \Cs{} by automorphisms with this property.  We provide shorter and more direct proofs of existing results, and also extend these results in different directions. Given an action $\alpha \colon \Gamma \curvearrowright A$ with \SAvP{} of a discrete group $\Gamma$ on a unital \Cs{} $A$. Then we have:
\begin{itemize}
\item Each $\alpha$-invariant tracial state on $A$ has a \emph{unique} extension to a state on $A \rtimes_r \Gamma$ which is invariant under conjugation by unitaries from $A$. Ursu has recently in \cite{Ursu:trace} completely characterized actions for which invariant traces have unique extensions to traces on the crossed product. 
\item If $A$ has the Dixmier property, then $A \subseteq A \rtimes_r \Gamma$ has the relative Dixmier property. This was shown by Popa in \cite{Popa:JFA2000} when $A$ is simple, and this result was his motivation for considering the strong averaging property for automorphisms.
\item If $\Lambda$ is a subgroup of $\Gamma$ so that the action $\Lambda \curvearrowright A$ is minimal, then $A \rtimes_r \Lambda \subseteq A \rtimes_r \Gamma$ is $C^*$-irreducible, cf.\ \cite{Ror:C*-irreducible}, i.e., all intermediate \Cs s are simple. This is a classical result when $\Lambda$ is the trivial group, see \cite{Ror:C*-irreducible}, and was extended to normal subgroups $\Lambda \lhd \Gamma$ in \cite{BedosOmland:irreducible}. 

Moreover, if $\Lambda$ is normal in $\Gamma$, then there is a one-to-one Galois correspondance between intermediate \Cs s $A \rtimes_r \Lambda \subseteq D \subseteq A \rtimes_r \Gamma$  and intermediate groups $\Lambda \subseteq \Upsilon \subseteq \Gamma$, via $D = A \rtimes_r \Upsilon$. This result is already known (even for outer actions), cf.\ Cameron-Smith, \cite{CamSmith:Galois} (when $\Lambda$ is trivial) and Bedos-Omland, \cite{BedosOmland:irreducible}, using \cite{CamSmith:Galois}. We give  here an elementary self-contained proof using the strong averaging property.
\end{itemize}
.

 \noindent {\bf{Acknowledgements:}} I thank Erik Bedos, George Elliott, Thierry Giordano, Tron Omland, and Sorin Popa for useful comments and discussions.

\section{Popa's averaging property for automorphisms} \label{sec:SAvP}

\noindent We begin by formally defining the averaging properties, implicitly defined in Popa's paper \cite{Popa:JFA2000}, that is the topic of this paper. 

\begin{definition} \label{def:AvP}
Let $\cA$ be a unital \Cs, let $\cU(\cA)$ denote the group of unitary elements in $\cA$, and let $\alpha$ be an automorphism on $\cA$. 

Let $b \in \cA$. Denote by $C_\cA^{\, \alpha}(b)$, $C_\cA(b) = C_\cA^{\, \mathrm{id}_\cA}(b)$,  and $C_\cA^{\, \alpha} = C_\cA^{\, \alpha}(1_A)$ the norm closed convex hull of the sets
$$\{vb\alpha(v)^* : v \in \cU(\cA)\}, \qquad \{vbv^* : v \in \cU(\cA)\},  \qquad  \{v\alpha(v)^* : v \in \cU(\cA)\},$$ respectively. We use the same notation as above in the case where $b$ belongs to a \Cs{} $\cB$ that contains $\cA$.

We say that $\alpha$ has the \emph{averaging property} \AvP{} if $0 \in C_\cA^{\, \alpha}$, and that $\alpha$ has the \emph{strong averaging property} \SAvP{} if $0 \in C_\cA^{\, \alpha}(b)$, for all $b \in \cA$. 
\end{definition}

\noindent It is easy to see that if $b' \in C_\cA^{\, \alpha}(b)$, then $C_\cA^{\, \alpha}(b') \subseteq C_\cA^{\, \alpha}(b)$. Recall also that the usual Dixmier property of a \Cs{} $A$ holds when $C_\cA(b) \cap \C \cdot 1_\cA \ne \emptyset$, for all $b \in \cA$.\footnote{This definition of the (usual) Dixmier property is more strict than the commonly used one that just requires that $C_\cA(b)$ has non-empty intersection with the center of $\cA$. } It was shown in \cite{Haagerup-Zsido} that a simple unital \Cs{} has the Dixmier property if and only if it has at most one tracial state. See \cite{ArchRobTiku:Dixmier} for results about the Dixmier for non-simple \Cs s. 

Repeated application of the averaging procedure yields:

\begin{lemma} \label{lm:A}
If $\alpha_1, \dots, \alpha_n$ are automorphisms on a unital \Cs{} $A$ with \SAvP{} and $b_1, \dots, b_n \in A$, then for each $\ep >0$ there exist unitaries $v_1, \dots, v_m \in A$ such that
$$\Big\| \frac{1}{m} \sum_{i=1}^m v_i b_j \alpha_j(v_i)^*\Big\| < \ep, \quad j=1,2, \dots, n.$$
\end{lemma}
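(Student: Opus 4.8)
The plan is to argue by induction on the number $n$ of pairs $(b_j,\alpha_j)$, after first fixing notation and recording two reductions. For a finite tuple $V=(v_1,\dots,v_p)$ of unitaries in $\cU(\cA)$ and an automorphism $\alpha$, I write $T_V^\alpha(x)=\tfrac1p\sum_{k=1}^p v_k\,x\,\alpha(v_k)^*$ for the associated uniform averaging operator; each $T_V^\alpha$ is a contraction because $\|v_k\,x\,\alpha(v_k)^*\|=\|x\|$. I would first observe that \SAvP{} for $\alpha$, which literally says $0\in C_\cA^{\,\alpha}(b)$ for all $b$, is equivalent to the statement that for every $\ep>0$ there is a tuple $V$ with $\|T_V^\alpha(b)\|<\ep$: rational convex combinations are norm-dense in the convex hull, and a rational convex combination $\sum_i(k_i/p)\,v_i b\,\alpha(v_i)^*$ is precisely $T_V^\alpha(b)$ for the tuple in which each $v_i$ is repeated $k_i$ times. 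This replaces arbitrary convex weights by the uniform weights $1/m$ demanded in the statement.

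The second and crucial ingredient is a composition identity for a \emph{single} automorphism: given tuples $V=(v_k)$ and $W=(w_l)$, the product tuple $WV:=(w_lv_k)_{k,l}$ satisfies $T_{WV}^\alpha=T_W^\alpha\circ T_V^\alpha$. This holds because $\alpha(w_lv_k)^*=\alpha(v_k)^*\alpha(w_l)^*$, so each product unitary $w_lv_k$ reproduces exactly one term of the iterated average. Thus composing two uniform averagings for the same automorphism is again a single uniform averaging, over the products $w_lv_k$.

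With these in hand I would run the induction as follows. Suppose a tuple $U$ has been found with $\|T_U^{\alpha_j}(b_j)\|<\ep$ for $j=1,\dots,r$; the base case $r=1$ is the first reduction applied to $b_1$ and $\alpha_1$. I then apply the first reduction to the \emph{partially averaged} element $T_U^{\alpha_{r+1}}(b_{r+1})\in\cA$ and the automorphism $\alpha_{r+1}$, obtaining a tuple $W$ with $\|T_W^{\alpha_{r+1}}\big(T_U^{\alpha_{r+1}}(b_{r+1})\big)\|<\ep$, and I pass to the product tuple $WU$. For each $j\le r$ the composition identity with $\alpha=\alpha_j$ gives $T_{WU}^{\alpha_j}(b_j)=T_W^{\alpha_j}\big(T_U^{\alpha_j}(b_j)\big)$, whose norm is still $<\ep$ since $T_W^{\alpha_j}$ is a contraction; for $j=r+1$ it gives $T_{WU}^{\alpha_{r+1}}(b_{r+1})=T_W^{\alpha_{r+1}}\big(T_U^{\alpha_{r+1}}(b_{r+1})\big)$, which is $<\ep$ by the choice of $W$. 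Hence $WU$ handles all indices $j=1,\dots,r+1$, and at $r=n$ listing the entries of the final tuple as $v_1,\dots,v_m$ yields the lemma.

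The one genuine obstacle is that averaging operators attached to \emph{different} automorphisms do not compose into a single uniform averaging, so the composition identity is available only when one automorphism is used throughout. The point is that this is exactly the situation for each fixed $j$ inside the product tuple $WU$, because a single summand $(w_lv_k)\,b_j\,\alpha_j(w_lv_k)^*$ involves only $\alpha_j$. The corresponding maneuver that makes the induction go through is to feed the partially averaged element $T_U^{\alpha_{r+1}}(b_{r+1})$, rather than $b_{r+1}$ itself, into \SAvP{} at each step; contractivity of the operators $T_W^{\alpha_j}$ then ensures that the smallness already arranged for the earlier pairs is preserved when the new unitaries are adjoined.
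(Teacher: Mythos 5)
Your proof is correct and is essentially the paper's own argument: the paper offers no detailed proof, saying only that the lemma follows by ``repeated application of the averaging procedure'' (together with the noted fact that $b' \in C_\cA^{\,\alpha}(b)$ implies $C_\cA^{\,\alpha}(b') \subseteq C_\cA^{\,\alpha}(b)$), and your induction---feeding the partially averaged element into \SAvP{} at each step and passing to product tuples, with contractivity preserving the smallness already achieved---is precisely that procedure made explicit. The reduction to uniform weights via rational convex combinations and the composition identity $T_{WV}^{\alpha}=T_W^{\alpha}\circ T_V^{\alpha}$ are exactly the right details to supply.
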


\noindent
The property described in Lemma~\ref{lm:A} above was introduced by Popa in  
\cite[Corollary 4.1]{Popa:JFA2000} as a way of establishing that the inclusion of a \Cs{} with the Diximier property inside its crossed product with a suitably outer action has the relative Dixmier property, cf.\ \cite{Popa:ENS1999}. The relative Dixmier property will be discussed further in Section~\ref{sec:applications}.

We note the following fact, whose  easy proof is  left to the reader. As usual, for each unitary element $u \in A$, $\mathrm{Ad}_u$ denotes the inner automorphism $a \mapsto uau^*$, $a \in A$.

\newpage

\begin{lemma} \label{lm:B} Let $\alpha$ be an automorphism on a unital \Cs{} $\cA$ and let $u \in \cU(\cA)$. Then:
\begin{enumerate}
\item $\Ad_u \circ \alpha$ has \AvP{} if and only if $0 \in C_\cA^{\, \alpha}(u)$,
\item $\Ad_u$ has \AvP{} if and only if $0 \in C_\cA(u)$.
\end{enumerate}
\end{lemma}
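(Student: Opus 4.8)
The plan is to reduce both statements to a single observation: right multiplication by the unitary $u^*$ is a linear, isometric bijection of $\cA$ onto itself that carries the generating set of $C_\cA^{\,\Ad_u \circ \alpha}$ onto that of $C_\cA^{\,\alpha}(u)$ and fixes $0$. First I would compute, for $v \in \cU(\cA)$, the element $v\,(\Ad_u \circ \alpha)(v)^*$. Since $(\Ad_u \circ \alpha)(v) = u\,\alpha(v)\,u^*$, we have $(\Ad_u \circ \alpha)(v)^* = u\,\alpha(v)^*\,u^*$, and hence
$$v\,(\Ad_u \circ \alpha)(v)^* = \big(v\,u\,\alpha(v)^*\big)\,u^*.$$
Thus the set $\{v\,(\Ad_u \circ \alpha)(v)^* : v \in \cU(\cA)\}$ is precisely the image of the set $\{v\,u\,\alpha(v)^* : v \in \cU(\cA)\}$ under the map $R_{u^*}\colon x \mapsto x u^*$.

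Next I would invoke that $R_{u^*}$ is a surjective linear isometry of $\cA$, because $u$ is unitary. Such a map commutes with forming convex hulls and with taking norm closures, so it sends $C_\cA^{\,\alpha}(u)$ — the norm closed convex hull of the second set — bijectively onto the norm closed convex hull of the first set, which is exactly $C_\cA^{\,\Ad_u \circ \alpha} = C_\cA^{\,\Ad_u \circ \alpha}(1_\cA)$. In other words, $C_\cA^{\,\Ad_u \circ \alpha} = C_\cA^{\,\alpha}(u)\,u^*$.

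Finally, since $R_{u^*}(0) = 0$ and $R_{u^*}$ is injective, $0$ belongs to $C_\cA^{\,\Ad_u \circ \alpha}$ if and only if $0$ belongs to $C_\cA^{\,\alpha}(u)$. Recalling that, by definition, $\Ad_u \circ \alpha$ has \AvP{} precisely when $0 \in C_\cA^{\,\Ad_u \circ \alpha}$, this proves (i). Part (ii) is then the special case $\alpha = \mathrm{id}_\cA$, using that $\Ad_u \circ \mathrm{id}_\cA = \Ad_u$ and that $C_\cA^{\,\mathrm{id}_\cA}(u) = C_\cA(u)$ by definition. I do not expect any genuine obstacle here: the only idea needed is to strip off the right factor $u^*$ by an isometric bijection, and what remains is the routine verification that passing to convex hulls and norm closures is compatible with this bijection.
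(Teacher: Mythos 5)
Your proposal is correct, and it is essentially the argument the paper intends: the paper leaves this lemma's proof to the reader, but your identity $v\,(\Ad_u \circ \alpha)(v)^* = \bigl(vu\alpha(v)^*\bigr)u^*$, followed by stripping off the right factor $u^*$ via the isometric linear bijection $x \mapsto xu^*$ (which commutes with closed convex hulls), is exactly the computation the paper itself performs in the proof of Proposition~\ref{prop:0}, where $vb\alpha(v)^* = (vbuv^*)u^*$ yields $C_\cA^{\,\alpha}(b) = C_\cA(bu)u^*$. Your reduction of (ii) to (i) by taking $\alpha = \mathrm{id}_\cA$ is also fine.
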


\begin{proposition} \label{prop:sr1}
Let $\alpha$ be an automorphism on a unital \Cs{} $\cA$. If $\alpha$ has \SAvP, then $\mathrm{Ad}_u \circ \alpha$ has \AvP, for all $u \in \cU(A)$. Conversely, if the stable rank of $A$ is one, and if $\mathrm{Ad}_u \circ \alpha$ has \AvP, for all $u \in \cU(A)$, then $\alpha$ has \SAvP.
\end{proposition}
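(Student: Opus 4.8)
The plan is to handle the two implications separately. The forward implication is immediate: if $\alpha$ has \SAvP{} then $0 \in C_\cA^{\,\alpha}(b)$ for every $b$, in particular $0 \in C_\cA^{\,\alpha}(u)$ for every $u \in \cU(\cA)$, which by Lemma~\ref{lm:B}(i) says exactly that $\mathrm{Ad}_u \circ \alpha$ has \AvP. So here one only specializes \SAvP{} to unitary arguments and quotes Lemma~\ref{lm:B}(i). All the content is in the converse.

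For the converse, note first that by Lemma~\ref{lm:B}(i) the hypothesis reads $0 \in C_\cA^{\,\alpha}(u)$ for every unitary $u$. I would study $S = \{b \in \cA : 0 \in C_\cA^{\,\alpha}(b)\}$ and aim to show $S = \cA$. Three properties are routine. $S$ is norm-closed, because averaging operations $x \mapsto \sum_k \mu_k w_k x \alpha(w_k)^*$ are contractive, so one sending $b_n$ near $0$ sends a nearby $b$ nearly as near. $S$ is stable under scalar multiplication, since $v(\lambda b)\alpha(v)^* = \lambda\, vb\alpha(v)^*$. And $S$ is invariant under the twisted conjugations $b \mapsto wb\alpha(w)^*$, $w \in \cU(\cA)$, because $v(wb\alpha(w)^*)\alpha(v)^* = (vw)b\alpha(vw)^*$ gives $C_\cA^{\,\alpha}(wb\alpha(w)^*) = C_\cA^{\,\alpha}(b)$. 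By hypothesis $\cU(\cA) \subseteq S$. Since the closed convex hull of $\cU(\cA)$ is the closed unit ball of $\cA$ (Russo--Dye) and $S$ is a closed cone, it suffices to prove that $S$ is \emph{convex}: then $S$ contains the unit ball, and being a cone it equals $\cA$, which is exactly \SAvP.

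Convexity of $S$ is the heart of the matter and is a \emph{simultaneous} averaging statement: for $u_1, \dots, u_n \in \cU(\cA)$ one wants a single averaging operation $T$ with all $\|T(u_j)\|$ small at once, since $T$ then makes every convex combination of the $u_j$ small. The natural attempt averages $u_1$ by some $T_1$, then $u_2$, and so on, using that a composition of averaging operations is again one and that they are contractive, so the gains for earlier indices persist. The obstruction — and the reason the hypothesis on unitaries alone is not visibly enough — is that $T_1(u_2)$ is no longer unitary, so one cannot directly reapply ``$0 \in C_\cA^{\,\alpha}(u)$ for unitaries $u$'' to continue. This is the step I expect to be the main obstacle, and precisely where stable rank one must be used. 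I would bring it in through density of the invertibles together with genuine polar decomposition: every contraction is approximated by an invertible $a = u|a|$ with $u \in \cU(\cA)$ and $|a| \ge 0$, so since $S$ is closed it is enough to treat elements $up$ with $u \in \cU(\cA)$ and $0 \le p \le 1$, where the identity $C_\cA^{\,\alpha}(up) = C_\cA^{\,\alpha}(p\,\alpha(u))$ (the same substitution $w = vu$) moves the twist off the unitary factor. Showing that these non-unitary intermediates — equivalently the elements $up$ — still lie in $S$, thereby making the iterative simultaneous averaging go through, is the decisive and most delicate point, and it is here that the polar decomposition available only under stable rank one does the work; once it is secured, $S$ is convex and the converse follows.
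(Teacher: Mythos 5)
Your forward implication is exactly the paper's: specialize \SAvP{} to unitary arguments and quote Lemma~\ref{lm:B} (i). Your preparatory observations about $S=\{b\in\cA : 0\in C_\cA^{\,\alpha}(b)\}$ (norm-closedness, stability under scalars, invariance under $b\mapsto wb\alpha(w)^*$, and the Russo--Dye reduction of everything to convexity of $S$) are also correct. But the converse has a genuine gap: the step you yourself call ``the decisive and most delicate point'' is never proved, and it is not really a step at all but the whole proposition in disguise. Under stable rank one the invertibles are dense and every invertible factors as $u|a|$ with $u\in\cU(\cA)$; since $S$ is closed and scalar-stable, the claim ``$up\in S$ for all $u\in\cU(\cA)$ and all positive contractions $p$'' is \emph{equivalent} to $S=\cA$, i.e.\ to \SAvP{} itself. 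Likewise, you correctly identify why simultaneous averaging stalls---$T_1(u_2)$ is no longer unitary, and $S$ is not known to be invariant under averaging operations, since the available inclusion $C_\cA^{\,\alpha}(T(b))\subseteq C_\cA^{\,\alpha}(b)$ points the wrong way---but you offer no mechanism that overcomes this; the identity $C_\cA^{\,\alpha}(up)=C_\cA^{\,\alpha}(p\,\alpha(u))$ only relocates the difficulty, since $p\,\alpha(u)$ is still not unitary. So the proposal reduces the proposition to an unproved claim of the same strength.

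The missing idea is a quantitative strengthening of Russo--Dye that is the actual content of the stable rank one hypothesis: by \cite{Ror:Annals}, every $b\in\cA$ can be written as $b=\tfrac{\|b\|}{3}(u_1+u_2+u_3)$ with $u_1,u_2,u_3\in\cU(\cA)$. (Plain Russo--Dye or Kadison--Pedersen cannot produce this exact constant for an element of norm one---think of a non-unitary isometry---and without it no norm decrease results.) Granting this, one avoids simultaneous averaging altogether: by hypothesis and Lemma~\ref{lm:B} (i), $0\in C_\cA^{\,\alpha}(u_1)$, so some averaging operation $T$ makes $T(u_1)$ small; since $T$ is a linear contraction, $T(b)\in C_\cA^{\,\alpha}(b)$ has norm at most $\tfrac23\|b\|+\ep$. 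And now---precisely where your approach stalls---one does not need $T(b)$ to lie in $S$: one re-applies \cite{Ror:Annals} to decompose $T(b)$ itself into three scaled unitaries, kills one of them, and invokes the transitivity fact stated after Definition~\ref{def:AvP} (if $b'\in C_\cA^{\,\alpha}(b)$ then $C_\cA^{\,\alpha}(b')\subseteq C_\cA^{\,\alpha}(b)$) to keep every output inside the single closed convex set $C_\cA^{\,\alpha}(b)$. Iterating yields elements of $C_\cA^{\,\alpha}(b)$ of norm at most $\bigl(\tfrac23\bigr)^k\|b\|$ plus errors, whence $0\in C_\cA^{\,\alpha}(b)$. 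The hypothesis is only ever applied to honest unitaries, so the obstruction you identified never arises.
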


\begin{proof} The first part follows immediately from Lemma~\ref{lm:B} (i). Suppose that $A$ has stable rank one and that $\mathrm{Ad}_u \circ \alpha$ has \AvP, for all $u \in \cU(A)$. Let $b \in A$. It follows from \cite{Ror:Annals} that there exist unitaries $u_1, u_2, u_3 \in A$ such that 
$$b = \frac{\|b\|}{3} \big( u_1+u_2+u_3\big) =  \frac{\|b\|}{3} u_1 + b_0, \qquad b_0 = \frac{\|b\|}{3} \big( u_2+u_3\big).$$
Since $0 \in C_A^\alpha(u_1)$ by Lemma~\ref{lm:B} (i), we conclude that $C_A^\alpha(b)$ contains elements arbitrarily close to $C_A^\alpha(b_0)$, and in particular, that $\inf \{ \|x\| : x \in C_A^\alpha(b)\} \le \|b_0\| = \frac23 \|b\|$. Repeating this process yields the result.
\end{proof}

\begin{question} \label{q:sr1} Does the reverse implication of Proposition~\ref{prop:sr1} hold for general \Cs s not necessarily of stable rank one?
\end{question}

\noindent One can relate the averaging properties of an automorphism on a \Cs{} to the usual Diximier property relatively to the crossed product \Cs{} as follows:

\begin{proposition} \label{prop:0} Let $\cA$ be a unital \Cs, and let $\alpha$ be an automorphism on $\cA$. Let $u \in \cA \rtimes_r \Z$ be the unitary implementing the action of $\alpha$. Then:
\begin{enumerate}
\item $\alpha$ has \AvP{} if and only if $0 \in C_{\cA}(u)$,
\item $\alpha$ has \SAvP{} if and only if $0 \in C_{\cA}(bu)$, for all $b \in \cA$.
\end{enumerate}

\end{proposition}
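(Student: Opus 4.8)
The plan is to reduce both parts to a single algebraic identity that turns conjugation of $bu$ by a unitary of $\cA$ into the averaging operation defining $C_\cA^{\,\alpha}(b)$. The implementing unitary satisfies $uau^* = \alpha(a)$ for $a \in \cA$, so for $v \in \cU(\cA)$ we have $uv^* = uv^*u^*\,u = \alpha(v^*)u = \alpha(v)^*u$ inside $\cA \rtimes_r \Z$, and hence
$$v(bu)v^* = vb\,(uv^*) = \big(vb\,\alpha(v)^*\big)\,u.$$
Thus conjugating $bu$ by $v$ yields the element $vb\alpha(v)^* \in \cA$, multiplied on the right by the fixed unitary $u$. This is exactly the family whose closed convex hull defines $C_\cA^{\,\alpha}(b)$.

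Next I would record that right multiplication by $u$, i.e.\ the map $\Phi \colon \cA \to \cA \rtimes_r \Z$ given by $\Phi(a) = au$, is a linear isometry onto the subspace $\cA u$, and that $\cA u$ is norm-closed in $\cA \rtimes_r \Z$ (being the isometric image of the complete space $\cA$). Hence $\Phi$ is a linear homeomorphism of $\cA$ onto $\cA u$, so it carries norm-closed convex hulls to norm-closed convex hulls, with closures taken in $\cA \rtimes_r \Z$ agreeing with those taken in $\cA u$. Applying this to the set $\{vb\alpha(v)^* : v \in \cU(\cA)\}$ and invoking the displayed identity gives
$$C_\cA(bu) = \overline{\conv}\{v(bu)v^* : v \in \cU(\cA)\} = \Phi\big(C_\cA^{\,\alpha}(b)\big) = C_\cA^{\,\alpha}(b)\,u.$$
Since $\Phi$ is injective with $\Phi(0)=0$, this yields the key equivalence: $0 \in C_\cA(bu)$ if and only if $0 \in C_\cA^{\,\alpha}(b)$.

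Both parts then follow at once. For (ii), $\alpha$ has \SAvP{} precisely when $0 \in C_\cA^{\,\alpha}(b)$ for every $b \in \cA$, which by the equivalence just established holds precisely when $0 \in C_\cA(bu)$ for every $b \in \cA$. For (i), specializing to $b = 1_\cA$ gives $C_\cA(u) = \Phi\big(C_\cA^{\,\alpha}\big)$, so $0 \in C_\cA(u)$ if and only if $0 \in C_\cA^{\,\alpha}$, i.e.\ if and only if $\alpha$ has \AvP. There is no serious obstacle here; the only points that require care are the verification of the conjugation identity $v(bu)v^* = (vb\alpha(v)^*)u$, which rests solely on the relation $uau^* = \alpha(a)$, and the observation that an isometric linear bijection onto a closed subspace preserves norm-closed convex hulls, so that $\Phi$ transports $C_\cA^{\,\alpha}(b)$ exactly onto $C_\cA(bu)$.
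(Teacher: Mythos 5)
Your proof is correct and follows essentially the same route as the paper: the paper's entire proof is the single identity $vb\alpha(v)^* = (vbuv^*)u^*$, which shows $C_\cA^{\,\alpha}(b) = C_\cA(bu)u^*$, exactly your identity $v(bu)v^* = (vb\alpha(v)^*)u$ read in reverse. Your additional remarks (that right multiplication by the unitary $u$ is a linear isometry preserving closed convex hulls, hence $0$ lies in one set iff it lies in the other) merely spell out what the paper leaves implicit.
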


\begin{proof} For each $b \in \cA$ and for each unitary $v \in \cU(\cA)$ we have
$vb\alpha(v)^* = (vbuv^*)u^*$, which shows that $C^{\, \alpha}_\cA(b) = C_A(bu)u^*$, and hence that $C^{\, \alpha}_\cA = C_\cA(u)u^*$. 
\end{proof}

\noindent Recall that the bi-dual $A^{**}$ of a \Cs{} $A$ has a natural structure of a von Neumann algebra, and that it moreover is the universal enveloping von Neumann algebra of $A$, see \cite[Section 1.4]{Brown-Ozawa}.

\begin{definition} For each automorphism $\alpha$ on a unital \Cs{} $\cA$, let $\bar{\alpha}$ denote its  (unique) normal extension to $\cA^{**}$. 

Write $\cA^{**} = \cA^{**}_\fin \oplus \cA^{**}_\Inf$, where $ \cA^{**}_\fin$ and $\cA^{**}_\Inf$ denote the finite, respectively, the properly infinite central parts of $\cA^{**}$. Let $\bar{\alpha}_\fin$ and $\bar{\alpha}_\Inf$ denote the restrictions of $\bar{\alpha}$ to $ \cA^{**}_\fin$ and $\cA^{**}_\Inf$, respectively.
\end{definition}

\begin{definition} \label{def:prop-outer}
Following Elliott, \cite{Elliott:PRIM-1980}, an automorphism $\alpha$ on a \Cs{} $\cA$ is \emph{properly outer} if its restriction to any non-zero $\alpha$-invariant ideal $I$ of $A$ has distance 2 to the multiplier inner automorphisms on $I$.

For each $\alpha$-invariant ideal $I$ of $A$ we denote by $\overset{\centerdot}{\alpha}$ the ``descended'' automorphism on $\cA/I$ (when the ideal $I$ is clear from the context).

If  $\overset{\centerdot}{\alpha} \in \mathrm{Aut}(\cA/I)$ is outer, respectively, properly outer, for each $\alpha$-invariant ideal $I$ in $\cA$, then $\alpha$ is said to be \emph{residually outer}, respectively, \emph{residually properly outer}.

An automorphism $\beta$ on a von Neumann algebra $M$ is  \emph{properly outer} if its restriction  to each invariant central summand of $M$ is outer. 
\end{definition}

\noindent  It is a consequence of Sakai's theorem (that all bounded derivations on a simple \Cs{} are inner), \cite{Sakai:inner}, that any outer automorphism on a simple unital \Cs{} is properly outer.

Connes proved in \cite{Connes:Aut} that an automorphism $\beta$ on a von Neumann algebra $M$ is properly outer if and only if for each non-zero projection $p \in M$ and each $\ep>0$ there exists a non-zero projection $q \le p$ in $M$ such that $\|q\beta(q)\| < \ep$. We shall need this result in the version described in the lemma below. 

Proper outerness for an automorphism on a separable \Cs{} $A$ was recasted in eleven different ways in 
 \cite[Theorem 6.1]{OlePed:C*-dynamicIII}. One of the eleven conditions says, in analogy with Connes' result above, that for each non-zero hereditary sub-\Cs{} of $A$ and for each $\ep >0$ there exists a positive element $h$ in the hereditary subalgebra such that $\|h\|=1$ and $\|h\alpha(h)\| < \ep$. One can further choose $h$ so that, for any given $b \in A$, we have $\|hb\alpha(h)\| < \ep$, cf.\ \cite[Lemma 7.1]{OlePed:C*-dynamicIII} or \cite{Kis:Auto}.
 
Although a priori not obvious, the definition of proper outerness  for \Cs s, when applied to an automorphism on a von Neumann algebra, agrees with the definition of proper outerness for von Neumann algebras. (One can use Connes' characterization of proper outerness to see this.)

\begin{lemma} \label{lm:D0}
Let $M$ be a von Neumann algebra and let $\alpha$ be a properly outer automorphism on $M$. 
\begin{enumerate}
\item For each $b \in M$, for each $\ep>0$, and for  non-zero projection $p$ in $M$  there exists a non-zero subprojection $q$ of $p$ such that $\|qb\alpha(q)\|<\ep$. 
\item For each $b \in M$ and for each $\ep >0$  there exists a family $(p_i)_{i \in I}$ of pairwise orthogonal projections in $M$ such that $\sum_{i \in I} p_i = 1$ and $\|p_i b \alpha(p_i)\| < \ep$, for all $i \in I$.
\end{enumerate}
\end{lemma}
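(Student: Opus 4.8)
The plan is to derive part (i) from the criterion of Connes recalled just above the lemma (which is essentially the case $b=1$) by absorbing $b$ into the automorphism, and then to obtain part (ii) from part (i) by a routine maximality argument.

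For part (i) the main device is the observation that if $w\in M$ is a unitary, then for every projection $q$ one has
\[
\|q\,w\,\alpha(q)\| = \|q\,w\,\alpha(q)\,w^*\| = \|q\,(\Ad_w\circ\alpha)(q)\|,
\]
since right multiplication by the unitary $w^*$ is isometric. Moreover $\Ad_w\circ\alpha$ is again properly outer: it agrees with $\alpha$ on the center $Z(M)$ (because $\alpha$ preserves $Z(M)$ and central elements commute with $w$), so it has exactly the same invariant central summands, and on each such summand it differs from $\alpha$ by an inner automorphism, which does not affect outerness. Hence Connes' criterion applies to $\Ad_w\circ\alpha$ and yields, for any nonzero projection and any tolerance, a nonzero subprojection $q$ with $\|q\,w\,\alpha(q)\|$ as small as desired.

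To pass from unitaries to a general $b$, I would write $b$ as a linear combination $b=\sum_{k=1}^4\lambda_k w_k$ of at most four unitaries $w_k\in M$ (each self-adjoint contraction is an average of two unitaries), and then apply the previous paragraph successively to $w_1,\dots,w_4$, starting from $p$, to build a decreasing chain $p\ge q_1\ge q_2\ge q_3\ge q_4$ of nonzero projections with $\|q_k\,w_k\,\alpha(q_k)\|<\ep'$. The crucial point is that smallness persists under shrinking the projection: if $q'\le q$ then $q'\,w\,\alpha(q')=q'\bigl(q\,w\,\alpha(q)\bigr)\alpha(q')$, so $\|q'\,w\,\alpha(q')\|\le\|q\,w\,\alpha(q)\|$. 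Thus $q:=q_4$ satisfies $\|q\,w_k\,\alpha(q)\|<\ep'$ for all $k$, whence $\|q\,b\,\alpha(q)\|\le\bigl(\textstyle\sum_k|\lambda_k|\bigr)\ep'<\ep$ for $\ep'$ chosen small enough, proving (i). For part (ii) I would run Zorn's lemma on families of pairwise orthogonal nonzero projections $(p_i)_{i\in I}$ satisfying $\|p_i\,b\,\alpha(p_i)\|<\ep$, ordered by inclusion; a maximal such family has supremum $\sum_i p_i\in M$, and if the complement $p=1-\sum_i p_i$ were nonzero, part (i) applied to $p$ would give a nonzero $q\le p$ with $\|q\,b\,\alpha(q)\|<\ep$ orthogonal to every $p_i$, contradicting maximality; hence $\sum_i p_i=1$.

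The only genuinely non-formal step is the reduction in part (i): everything hinges on rewriting $\|q\,w\,\alpha(q)\|$ as $\|q\,(\Ad_w\circ\alpha)(q)\|$ and on the stability of proper outerness under composition with inner automorphisms, after which Connes' criterion does the real work. The unitary decomposition and the nesting estimate are then bookkeeping, and part (ii) is a standard exhaustion argument.
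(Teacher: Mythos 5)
Your proof is correct and follows essentially the same route as the paper: reduce to the unitary case by rewriting $\|qw\alpha(q)\|$ as $\|q(\Ad_w\circ\alpha)(q)\|$ and applying Connes' criterion to the still properly outer automorphism $\Ad_w\circ\alpha$, then handle a general $b$ by decomposing it into unitaries and using that smallness persists under passing to subprojections, with part (ii) obtained by the same maximality argument. The only cosmetic difference is that you write $b$ as a linear combination of four unitaries, whereas the paper writes it as a scalar multiple of a sum of three.
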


\begin{proof} (i). We first note that Connes' characterization of properly outer automorphisms implies that 
 (i) holds when $b$ is a unitary. Indeed, if $u \in M$ is a unitary, then $\beta = \mathrm{Ad}_u \circ \alpha$ is still properly outer, so given $p$ and $\ep$ there exists a non-zero projection $q$ below $p$ such that $\|qu\alpha(q)\| = \|q\beta(q)u\| = \|q\beta(q)\| < \ep$. 

We can write $b = C \cdot (u_1+u_2+u_3)$, for some constant $C > 0$ (which can taken to be $C= \frac23 \|b\|$), and for unitaries $u_1, u_2, u_3 \in M$. By the argument above there exist non-zero projections $q:= q_3 \le q_2 \le q_1 \le p$ such that $\|q_ju_j\alpha(q_j) \| < \ep/3C$, for all $j$. It follows that $\|q u_j \alpha(q)\| < \ep/3C$, for all $j$, which in turns implies that $\|qb\alpha(q)\| < \ep$, as desired. 

(ii) follows from (i) via a standard maximality argument. 
\end{proof}

\noindent
It follows easily from Lemma~\ref{lm:A} that any automorphism with \SAvP{} must be outer. More is true:

\begin{proposition} \label{prop:A1} 
Any automorphism on a unital \Cs{} with \SAvP{}  is residually outer.
\end{proposition}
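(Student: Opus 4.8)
The plan is to show that \SAvP{} is inherited by quotients by $\alpha$-invariant ideals, and then to combine this with the observation, noted just above, that \SAvP{} forces outerness. The whole argument reduces to pushing the averaging identities of Lemma~\ref{lm:A} forward through a quotient map.

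First I would verify that \SAvP{} passes to quotients. Let $I$ be an $\alpha$-invariant ideal of $\cA$, let $\pi \colon \cA \to \cA/I$ be the quotient map, and let $\overset{\centerdot}{\alpha}$ be the descended automorphism, so that $\pi \circ \alpha = \overset{\centerdot}{\alpha} \circ \pi$. Given $\bar b \in \cA/I$, choose a lift $b \in \cA$ with $\pi(b) = \bar b$. Since $\alpha$ has \SAvP, for each $\ep > 0$ Lemma~\ref{lm:A} supplies unitaries $v_1, \dots, v_m \in \cU(\cA)$ with $\big\| \frac{1}{m}\sum_{i} v_i b \alpha(v_i)^* \big\| < \ep$. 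Applying the contractive $^*$-homomorphism $\pi$, and using that $\pi$ carries unitaries to unitaries and intertwines $\alpha$ with $\overset{\centerdot}{\alpha}$, I obtain $\big\| \frac{1}{m}\sum_{i} \pi(v_i)\, \bar b\, \overset{\centerdot}{\alpha}(\pi(v_i))^* \big\| < \ep$, with each $\pi(v_i) \in \cU(\cA/I)$. Hence $0 \in C_{\cA/I}^{\,\overset{\centerdot}{\alpha}}(\bar b)$; as $\bar b$ was arbitrary, $\overset{\centerdot}{\alpha}$ itself has \SAvP.

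Second, I would record why \SAvP{} implies outerness. If the descended automorphism were inner, say $\overset{\centerdot}{\alpha} = \Ad_w$ for some $w \in \cU(\cA/I)$, then for every $v \in \cU(\cA/I)$ one computes $v\, w^* \, \overset{\centerdot}{\alpha}(v)^* = v w^* (w v^* w^*) = w^*$, so that $C_{\cA/I}^{\,\overset{\centerdot}{\alpha}}(w^*) = \{w^*\}$; since $\|w^*\| = 1$, this set does not contain $0$, contradicting \SAvP. Therefore $\overset{\centerdot}{\alpha}$ is outer. Combining the two steps, $\overset{\centerdot}{\alpha}$ is outer for every $\alpha$-invariant ideal $I$, which is precisely the assertion that $\alpha$ is residually outer. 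I do not expect a serious obstacle here: the only subtlety is that unitaries in $\cA/I$ need not lift to unitaries in $\cA$, but this never arises, because the argument only transports averages forward through $\pi$ rather than lifting them back.
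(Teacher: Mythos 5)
Your proof is correct and follows essentially the same route as the paper's: the paper performs your two steps in one computation, lifting the implementing unitary $u$ of an inner quotient automorphism to $b \in \cA$ and checking that $\pi(vb^*\alpha(v)^*) = u^*$ for all $v \in \cU(\cA)$, which is exactly your quotient push-forward combined with your observation that $C^{\,\overset{\centerdot}{\alpha}}_{\cA/I}(w^*) = \{w^*\}$ when $\overset{\centerdot}{\alpha} = \Ad_w$. The only difference is organizational: you isolate the (correct and slightly more general) fact that \SAvP{} passes to quotients by invariant ideals, whereas the paper keeps the whole argument inside $\cA$ via the contrapositive.
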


\begin{proof} Let $\alpha$ be an automorphism on a unital \Cs{} $\cA$ which is not residually outer. Let $I$ be a proper closed two-sided invariant ideal in $\cA$ for which $\overset{\centerdot}{\alpha} \in \mathrm{Aut}(\cA/I)$ is inner, say equal to $\Ad_u$, for some unitary $u \in \cA/I$.  Let $b \in \cA$ be a lift of $u$ and let $\pi \colon \cA \to \cA/I$ denote the quotient mapping. Then, for each $v \in \cU(\cA)$, we have $\pi(vb^*\alpha(v)^*) = \pi(v) u^* \overset{\centerdot}{\alpha}(\pi(v))^* = \pi(v)\pi(v)^*u^* = u^*$. Hence $\pi(x) = u^*$, for all $x \in C_A^{\, \alpha}(b^*)$, which shows that $\alpha$ cannot have \SAvP.
\end{proof}

\noindent The lemma below is a consequence of the universal property of the bi-dual of a \Cs:

\begin{lemma} \label{lm:A1} For an automorphism $\alpha$ on a unital \Cs{} $\cA$, $\bar{\alpha}_\fin$ is properly outer if and only if, for each $\alpha$-invariant tracial state $\tau$ on $\cA$, the normal extension of $\alpha$ to $\pi_\tau(\cA)''$ is outer.
\end{lemma}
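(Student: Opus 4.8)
The plan is to exploit the correspondence, furnished by the universal property of $\cA^{**}$, between $\alpha$-invariant tracial states $\tau$ on $\cA$ and certain $\bar{\alpha}$-invariant central projections of $\cA^{**}_\fin$. First I would recall that an invariant trace $\tau$ has a unique normal extension $\bar{\tau}$ to $\cA^{**}$, whose support is a central projection $z_\tau$; since $\pi_\tau(\cA)''$ is finite and is canonically isomorphic to the reduced algebra $z_\tau\cA^{**}$, one has $z_\tau \le z_{\fin}$ (the central projection cutting out $\cA^{**}_\fin$), and the normal extension of $\alpha$ to $\pi_\tau(\cA)''$ is precisely $\bar{\alpha}$ restricted to the summand $z_\tau\cA^{**}$. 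Invariance of $\tau$ makes $z_\tau$ an $\bar{\alpha}$-invariant projection. With this dictionary in hand, the forward implication is immediate: if $\bar{\alpha}_\fin$ is properly outer, then by definition its restriction to every invariant central summand of $\cA^{**}_\fin$ is outer, and in particular its restriction to $z_\tau\cA^{**}$, i.e.\ the normal extension of $\alpha$ to $\pi_\tau(\cA)''$, is outer.

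For the converse I would argue by contraposition. Suppose $\bar{\alpha}_\fin$ is not properly outer; then there is a nonzero $\bar{\alpha}$-invariant central projection $z \le z_{\fin}$ such that the restriction $\bar{\alpha}|_{z\cA^{**}}$ is inner, say $\bar{\alpha}|_{z\cA^{**}} = \mathrm{Ad}_u$ for a unitary $u \in z\cA^{**}$. The goal is to manufacture an $\alpha$-invariant tracial state $\tau$ whose support satisfies $z_\tau \le z$, for then the extension of $\alpha$ to $\pi_\tau(\cA)'' = z_\tau\cA^{**}$ is $\mathrm{Ad}_{z_\tau u}$, which is inner, contradicting the hypothesis. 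The key observation that makes this possible is that an inner automorphism acts trivially on the center: letting $T \colon z\cA^{**} \to Z(z\cA^{**})$ be the (unique) center-valued trace of the finite von Neumann algebra $z\cA^{**}$, the trace identity gives $T \circ \mathrm{Ad}_u = T$, while uniqueness of $T$ forces $\bar{\alpha} \circ T = T \circ \bar{\alpha}$ on $z\cA^{**}$; combining the two yields $\bar{\alpha} \circ T = T$, that is $\bar{\alpha} = \mathrm{id}$ on the range $Z(z\cA^{**})$ of $T$.

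Consequently every normal state on $Z(z\cA^{**})$ is $\bar{\alpha}$-invariant, so for any normal state $\phi$ on the (abelian, nonzero) center the composition $\psi := \phi \circ T$ is a normal tracial state on $z\cA^{**}$ with $\psi \circ \bar{\alpha} = \psi$. Viewing $\psi$ as a normal state on $\cA^{**}$ supported under $z$ and setting $\tau := \psi|_\cA$, I obtain an $\alpha$-invariant tracial state on $\cA$ with $z_\tau \le z$, which completes the contradiction. The step I expect to be the crux — and the reason a naive averaging of a trace over the $\Z$-action would fail — is producing an invariant trace that is still \emph{normal} (equivalently, supported under $z$); this is exactly what the center-valued-trace argument of the previous paragraph secures, by reducing the problem to finding an invariant normal state on a center on which $\bar{\alpha}$ happens to act trivially. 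The remaining points (existence and equivariance of the center-valued trace, the identification $\pi_\tau(\cA)'' \cong z_\tau\cA^{**}$, and the fact that the normal extension of $\tau = \psi|_\cA$ recovers $\psi$) are standard consequences of Dixmier's theory of finite von Neumann algebras together with the universal property of the bi-dual.
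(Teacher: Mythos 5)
Your proof is correct. The paper gives no proof of Lemma~\ref{lm:A1} beyond the remark that it is a consequence of the universal property of the bi-dual, and your argument --- identifying $\alpha$-invariant tracial states on $\cA$ with $\bar{\alpha}$-invariant finite central summands of $\cA^{**}$ via the (central) supports of their normal extensions, and producing an invariant normal trace under a summand $z\cA^{**}$ on which $\bar{\alpha}$ is inner by composing a normal state on the center with the center-valued trace $T$ --- is exactly the intended fleshing-out of that remark; the only simplification available is that once $\bar{\alpha}|_{z\cA^{**}} = \mathrm{Ad}_u$, any normal tracial state on $z\cA^{**}$ (e.g.\ $\phi \circ T$) is automatically $\bar{\alpha}$-invariant, since traces are invariant under inner automorphisms, so the equivariance step $T \circ \bar{\alpha} = \bar{\alpha} \circ T$ is not actually needed.
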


\noindent We have arrived at the second obstruction to having \SAvP:

\begin{proposition} \label{prop:B1}
If $\alpha$ is an automorphism with \SAvP{} on a unital \Cs{} \cA, then $\bar{\alpha}_\fin$ is properly outer.
\end{proposition}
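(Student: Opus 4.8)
The plan is to prove the contrapositive and reduce, via Lemma~\ref{lm:A1}, to a trace computation in a single GNS von Neumann algebra. Suppose $\bar{\alpha}_\fin$ is \emph{not} properly outer. By Lemma~\ref{lm:A1} there is an $\alpha$-invariant tracial state $\tau$ on $\cA$ for which the normal extension of $\alpha$ to $M := \pi_\tau(\cA)''$ fails to be outer, hence is inner: there is a unitary $U \in \cU(M)$ with $\bar{\alpha} = \mathrm{Ad}_U$ on $M$, where I write $\bar{\alpha}$ also for this extension. Let $\bar{\tau}$ denote the normal tracial state on $M$ extending $\tau$, realised as the vector state $\bar{\tau}(X) = \langle X\xi_\tau, \xi_\tau\rangle$ of the cyclic GNS vector $\xi_\tau$; it is a trace precisely because $\tau$ is tracial.

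The key step is to introduce the bounded linear functional $\psi \colon \cA \to \C$, $\psi(a) = \bar{\tau}\big(\pi_\tau(a)\,U\big)$, and to observe that it is \emph{constant along averaging orbits}. Indeed, for $v \in \cU(\cA)$ and $b \in \cA$, using $\pi_\tau(\alpha(v)^*) = \bar{\alpha}(\pi_\tau(v))^* = U\,\pi_\tau(v)^*\,U^*$ one computes $\pi_\tau\big(vb\alpha(v)^*\big)\,U = \pi_\tau(v)\,\pi_\tau(b)\,U\,\pi_\tau(v)^*$, so that the trace property of $\bar{\tau}$ gives $\psi\big(vb\alpha(v)^*\big) = \bar{\tau}\big(\pi_\tau(v)^*\pi_\tau(v)\pi_\tau(b)U\big) = \bar{\tau}\big(\pi_\tau(b)\,U\big) = \psi(b)$. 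Since $\psi$ is linear and norm-continuous, it is therefore constant, equal to $\psi(b)$, on the entire norm closed convex hull $C_\cA^{\,\alpha}(b)$.

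Now I invoke \SAvP: since $0 \in C_\cA^{\,\alpha}(b)$ and $\psi(0)=0$, the previous paragraph forces $\psi(b)=0$, and as $b \in \cA$ was arbitrary we obtain $\bar{\tau}(\pi_\tau(b)U)=0$ for every $b \in \cA$. Writing this out with the vector state gives $\langle U\xi_\tau, \pi_\tau(b^*)\xi_\tau\rangle = 0$ for all $b$; as $\xi_\tau$ is cyclic, the vectors $\pi_\tau(b^*)\xi_\tau$ are dense in the GNS space, whence $U\xi_\tau = 0$, contradicting $\|U\xi_\tau\| = \|\xi_\tau\| = 1$. This contradiction shows that $\bar{\alpha}_\fin$ must be properly outer.

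The substance of the argument is concentrated in two places: the reduction supplied by Lemma~\ref{lm:A1}, which converts proper outerness of $\bar{\alpha}_\fin$ into a statement about the individual tracial GNS algebras and thereby produces an honest implementing unitary $U \in M$; and the recognition that the trace identity is exactly what makes the functional $\psi$ invariant under the averaging $b \mapsto vb\alpha(v)^*$. Everything else is routine. I expect the only point genuinely needing care to be the bookkeeping around the extension of $\alpha$ to $M$ — specifically that $\alpha$-invariance of $\tau$ is what guarantees a normal extension implemented on the GNS space, so that speaking of its inner/outer nature, and of $\mathrm{Ad}_U$ with $U \in M$, is legitimate.
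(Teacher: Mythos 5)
Your proof is correct, and its engine is the same as the paper's: you reduce via Lemma~\ref{lm:A1} to an $\alpha$-invariant tracial state $\tau$ whose normal extension to $M=\pi_\tau(\cA)''$ is inner, say $\mathrm{Ad}_U$ with $U \in \cU(M)$, and you observe that traciality of $\bar{\tau}$ makes the pairing $\psi(x) = \bar{\tau}(\pi_\tau(x)U)$ constant on each orbit $\{vb\alpha(v)^* : v \in \cU(\cA)\}$, hence on all of $C_\cA^{\,\alpha}(b)$; this invariance computation is identical to the one in the paper. Where you genuinely part ways is the endgame. The paper produces a single explicit witness: it approximates $U$ in $\|\cdot\|_{2,\tau}$ by a unitary $w = \pi_\tau(b)$ with $b \in \cA$ (a step that silently invokes the Kaplansky-type density of $\cU(\pi_\tau(\cA))$ in the unitary group of $M$), so that $\bar{\tau}(w^*U) \neq 0$, and then constancy of the pairing shows $0 \notin C_\cA^{\,\alpha}(b^*)$ for that particular $b$. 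You instead apply \SAvP{} to \emph{every} $b \in \cA$, conclude $\psi \equiv 0$ on $\cA$, and contradict unitarity of $U$ by cyclicity of the GNS vector: $U\xi_\tau \perp \pi_\tau(\cA)\xi_\tau$ forces $U\xi_\tau = 0$. Your ending is slightly more economical, needing only cyclicity rather than density of unitaries in the $2$-norm; the paper's ending is slightly more informative, as it pins down a concrete element of $\cA$ on which strong averaging fails and quantifies the obstruction by the nonzero constant $\bar{\tau}(w^*U)$. Both are complete and correct proofs of the contrapositive.
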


\begin{proof} Suppose that $\bar{\alpha}_\fin$ is not properly outer. Then, by Lemma~\ref{lm:A1}, there is an $\alpha$-invariant tracial state $\tau$ on $\cA$ such that $\alpha$ extends to an inner automorphism on  $\pi_\tau(\cA)''$, i.e., there is a unitary $u \in \pi_\tau(\cA)''$ such that $u \pi_\tau(b) u^* = \pi_\tau(\alpha(b))$, for all $b \in \cA$. Approximating $u$ by unitaries from $\pi_\tau(\cA)$ with respect to $\| \, \cdot \, \|_{2,\tau}$, we find a unitary $w \in \pi_\tau(\cA)$ with $\bar{\tau}(w^*u) \ne 0$, where $\bar{\tau}$ is the extension of $\tau$ to $\pi_\tau(\cA)''$. Choose $b \in \cA$ such that $\pi_\tau(b) = w$. Then, for each unitary $v \in \cA$, we have 
$$\bar{\tau}(\pi_\tau(vb^*\alpha(v)^*)u) = \bar{\tau}(\pi_\tau(v)w^* u\pi_\tau(v^*)) = \bar{\tau}(w^*u).$$
Hence $\bar{\tau}(\pi_\tau(x)u) = \bar{\tau}(w^*u) \ne 0$, for all $x \in C_\cA^{\, \alpha}(b^*)$, which proves that $0 \notin C_\cA^{\, \alpha}(b^*)$, so $\alpha$ does not have \SAvP.
\end{proof}

\noindent The result below was shown by Popa in \cite[Corollary 4.4]{Popa:ENS1999} in the case where $b=1$.

\begin{lemma} \label{lm:D} Let $\alpha$ be a properly outer automorphism on a von Neumann algebra $M$. Then $0$ belongs to the strong operator closure of $C_M^{\, \alpha}(b)$, for all $b \in M$.
\end{lemma}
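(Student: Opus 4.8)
The plan is to build the averaging explicitly using Lemma~\ref{lm:D0}(ii). The goal is, given $b \in M$ and $\ep > 0$, to produce a convex combination $\sum_k \lambda_k v_k b \alpha(v_k)^*$ whose norm is small; since we only need $0$ in the \emph{strong operator} closure, I can work with finitely many vectors and tolerate errors that are small in the strong-operator sense rather than in norm. First I would apply Lemma~\ref{lm:D0}(ii) to obtain a partition of unity $(p_i)_{i \in I}$ into pairwise orthogonal projections with $\sum_i p_i = 1$ and $\|p_i b \alpha(p_i)\| < \ep$ for all $i$. The key idea is that a partition of unity is exactly the data needed to build unitaries via a permutation/cyclic-shift construction: I want to average over unitaries that permute the summands so that the ``diagonal'' blocks $p_i b \alpha(p_i)$ get isolated and the off-diagonal contributions cancel.

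The concrete mechanism I would use is the following. Choose a finite subset $F \subseteq I$ of size $N$, and build unitaries $v_1, \dots, v_N$ supported on $\bigoplus_{i \in F} p_i M p_i$ (plus the identity on the complement $1 - \sum_{i\in F} p_i$), where each $v_k$ implements a cyclic shift of the projections $(p_i)_{i\in F}$ by $k$ steps — assuming the $p_i$ for $i \in F$ are pairwise Murray-von Neumann equivalent so that such partial isometries exist. The averaged element $\frac{1}{N}\sum_{k=1}^N v_k b \alpha(v_k)^*$ then, by the standard character-sum / Weyl-unitary cancellation, retains only the terms $p_i b \alpha(p_i)$ transported around the cycle, all of which have norm $< \ep$, while the off-diagonal pieces are killed by the averaging $\frac{1}{N}\sum_k \zeta^{k(i-j)} = 0$ for $i \neq j$. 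This reduces the averaged norm to something controlled by $\max_i \|p_i b \alpha(p_i)\| < \ep$ on the averaged part, plus a leftover term coming from the uncontrolled corner $1 - \sum_{i \in F} p_i$.

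\textbf{The main obstacle} is precisely that leftover corner: Lemma~\ref{lm:D0}(ii) gives a partition of unity but does not guarantee the $p_i$ are mutually equivalent, nor that finitely many of them sum to something close to $1$ in norm — indeed in a general von Neumann algebra the partition may be infinite and no finite subcollection is norm-close to $1$. This is exactly why the conclusion is only about the \emph{strong operator} closure rather than the norm closure. So instead of trying to dominate $1 - \sum_{i \in F} p_i$ in norm, I would fix a finite set of vectors $\xi_1, \dots, \xi_m$ in the Hilbert space on which $M$ acts and an $\ep > 0$, and choose the finite subset $F \subseteq I$ large enough that $\|(1 - \sum_{i \in F} p_i)\xi_j\|$ is small for each $j$ (possible since $\sum_i p_i = 1$ converges strongly). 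Then the leftover corner's contribution, applied to the chosen vectors, is strong-operator-small, and the averaged element's action on $\xi_1, \dots, \xi_m$ is within $\ep$ of $0$. To handle the equivalence issue I would either pass to a subfamily of mutually equivalent projections or, more robustly, regroup the $(p_i)_{i \in F}$ into $N$ blocks of comparable size and use partial isometries between the blocks; since $M$ is an arbitrary von Neumann algebra one must be slightly careful, but on the finite and properly infinite parts the required equivalences (or the Dixmier-style averaging within each type component) are available.

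Carrying this out, for each finite collection of vectors and each $\ep$ I obtain a genuine convex combination $x \in C_M^{\,\alpha}(b)$ with $\|x\xi_j\| < \ep$ for all $j$, which is precisely the statement that $0$ lies in the strong operator closure of $C_M^{\,\alpha}(b)$. I expect the cleanest writeup to reduce first to the case $b$ unitary (as in the proof of Lemma~\ref{lm:D0}(i)), or alternatively to invoke the three-unitary decomposition $b = C(u_1 + u_2 + u_3)$ so that the cyclic-shift cancellation need only be verified for unitary $b$ and then assembled by linearity and convexity.
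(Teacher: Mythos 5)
Your overall skeleton matches the paper's proof: obtain a partition of unity $(p_i)_{i\in I}$ from Lemma~\ref{lm:D0}~(ii), average $b$ against unitaries built from the projections in a finite subset $F$ (acting as the identity on the complement $1-p_F$), and handle the infiniteness of the partition by passing to the strong operator topology on finitely many vectors; that last part of your argument is correct and is essentially how the paper takes the limit over $F$. The gap is in the averaging mechanism itself. You build your unitaries as \emph{cyclic shifts} of the projections $(p_i)_{i\in F}$, which requires the $p_i$ to be pairwise Murray--von Neumann equivalent, and Lemma~\ref{lm:D0}~(ii) provides no such equivalence. This cannot be repaired by ``regrouping into blocks of comparable size'' or by treating the finite and properly infinite summands separately: if $M$ is abelian --- a case this lemma must cover, since it feeds into Corollary~\ref{cor:C} on commutative \Cs s via $A^{**}$ --- then two distinct non-zero orthogonal projections are \emph{never} equivalent, so neither shift unitaries nor any regrouping exist at all. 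Moreover, the cancellation you invoke, $\frac{1}{N}\sum_{k}\zeta^{k(i-j)}=0$ for $i\ne j$, is not what conjugation by shift unitaries produces (a shift carries the block $p_ib\alpha(p_j)$ to a different off-diagonal block; it does not multiply it by a scalar phase); that identity is the orthogonality relation for the \emph{diagonal} phase unitaries $v_k=\sum_{i\in F}\zeta^{ki}p_i+(1-p_F)$.

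And that observation is the fix, and it is exactly what the paper does: take $u_{F,\omega}=\sum_{i\in F}\omega_ip_i+(1-p_F)$ with $\omega=(\omega_i)_{i \in F}\in\T^F$ --- such unitaries exist for \emph{any} family of orthogonal projections, with no equivalence hypothesis --- and average $u_{F,\omega}\,b\,\alpha(u_{F,\omega})^*$ over the Haar measure on $\T^F$ (a discrete average over $\{\pm1\}^F$, or over powers of a single ``clock'' unitary, works equally well). All cross terms $p_ib\alpha(p_j)$, $i\ne j$, and all mixed terms involving $1-p_F$ are killed, leaving
$$y_F=\sum_{i\in F}p_ib\alpha(p_i)+(1-p_F)\,b\,(1-\alpha(p_F))\;\in\;C_M^{\,\alpha}(b).$$
Letting $F$ increase, $y_F$ converges in the strong operator topology to $y=\sum_{i\in I}p_ib\alpha(p_i)$; note that here you must control $(1-\alpha(p_F))\xi_j$ rather than $(1-p_F)\xi_j$, which is fine because $\alpha$ is normal, so $\alpha(p_F)\nearrow 1$ strongly. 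Finally $\|y\|\le\sup_i\|p_ib\alpha(p_i)\|\le\ep$ by orthogonality of the $p_i$ and of the $\alpha(p_i)$, so $0$ lies in the strong operator closure of $C_M^{\,\alpha}(b)$. Your proposed reduction to unitary $b$ is unnecessary and would not help in any case, since the obstruction you were fighting concerns the projections, not $b$.
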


\begin{proof} Fix $\ep >0$ and choose a family  $(p_i)_{i \in I}$  of projections in $M$ with $\sum_i p_i = 1$ and $\|p_ib\alpha(p_i)\| < \ep$, for all $i$, cf.\ Lemma~\ref{lm:D0}.  For each finite subset $F \subseteq I$ and $\omega = (\omega_i)_{i \in F} \in \T^F$, set $p_F = \sum_{i \in F} p_i$ and $u_{F,\omega} = \sum_{i \in F} \omega_i p_i + (1-p_F)$. Then
$$
y_F:= \int_{\T^F} u_{F,\omega} \, b \, \alpha(u_{F,\omega})^* \, d\mu(\omega) = \sum_{i \in F} p_ib\alpha(p_i) + (1-p_F)x(1-\alpha(p_F)),
$$
belongs to $C_M^\alpha(b)$, 
where $\mu$ is the Haar measure on $\T^{F}$. The sum $y:=  \sum_{i \in I} p_ib\alpha(p_i)$ belongs to the strong operator closure of $\{y_F\}_F$, and hence to the strong operator closure of  $C_M^{\, \alpha}(b)$. 
 By orthogonality of the projections $\{p_i\}_{i \in I}$, we deduce that $\|y\| = \sup\{\|p_ib\alpha(p_i)\|\} \le \ep.$ 
\end{proof}

\noindent Recall the well-known---and often used---fact (which follows from basic properties of the bi-dual and a Hahn-Banach argument): If $C$ is a bounded convex subset of a \Cs{} $\cA$, then $\overline{C} = \overline{C}^w \cap \cA$, where
$\overline{C}$ and $\overline{C}^w$ denote the norm, respectively, the strong (or, equivalently, the weak) operator closure of $C$ in $\cA^{**}$.

\begin{lemma} \label{lm:E} Let $A$ be a  unital \Cs, let $\alpha$ be an  automorphism on $\cA$, and let $\bar{\alpha}$ be its extension  to the bi-dual $A^{**}$.
\begin{enumerate}
\item If $0$ belongs to the strong operator closure of $C_{A^{**}}^{\; \bar{\alpha}}$, then $\alpha$ has \AvP.
\item If $0$ belongs to the strong operator closure of $C_{A^{**}}^{\; \bar{\alpha}}(b)$, for all $b \in A$, then $\alpha$ has \SAvP.
\end{enumerate}
\end{lemma}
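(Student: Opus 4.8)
The two parts are instances of the same argument, so I would prove part~(ii) and note that (i) is the special case $b = 1_A$. The key tool is the fact, recalled immediately before the lemma: for a bounded convex set $C \subseteq A$, the norm closure $\overline{C}$ in $A$ equals $\overline{C}^w \cap A$, where $\overline{C}^w$ is the strong (equivalently weak) operator closure in $A^{**}$. The plan is to apply this to $C = C_A^{\,\alpha}(b)$, the norm-closed convex hull of $\{vb\alpha(v)^* : v \in \cU(A)\}$ inside $A$, and to show that $0$ lies in its weak-operator closure in $A^{**}$, so that $0 \in \overline{C}^w \cap A = C$, which is exactly \SAvP{} for the element $b$.

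\emph{First step.} I would compare the convex set $C_A^{\,\alpha}(b)$, generated by unitaries $v \in \cU(A)$, with the larger convex set $C_{A^{**}}^{\,\bar\alpha}(b)$, generated by unitaries in $A^{**}$. Since $\bar\alpha$ extends $\alpha$ and agrees with it on $A$, every element $vb\alpha(v)^*$ with $v \in \cU(A)$ equals $vb\bar\alpha(v)^*$ and hence lies in the generating set for $C_{A^{**}}^{\,\bar\alpha}(b)$; taking norm-closed convex hulls gives $C_A^{\,\alpha}(b) \subseteq C_{A^{**}}^{\,\bar\alpha}(b)$. The nontrivial direction is the reverse containment \emph{at the level of weak-operator closures}: I claim that the weak-operator closure of $C_A^{\,\alpha}(b)$ in $A^{**}$ contains the set whose weak-operator closure we are told contains $0$. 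This is where Kaplansky's density theorem enters --- every unitary $w \in A^{**}$ can be approximated in the strong (and strong$^*$) operator topology by unitaries from $A$ (or, more carefully, one approximates the generators $wb\bar\alpha(w)^*$; since $b$ is bounded and $\bar\alpha$ is normal hence weak-operator continuous on bounded sets, strong$^*$ approximation of $w$ by unitaries $v \in \cU(A)$ yields weak-operator approximation of $wb\bar\alpha(w)^*$ by $vb\alpha(v)^*$). Consequently every generator of $C_{A^{**}}^{\,\bar\alpha}(b)$ lies in the weak-operator closure of $C_A^{\,\alpha}(b)$, and taking weak-closed convex hulls gives
$$
\overline{C_{A^{**}}^{\,\bar\alpha}(b)}^{\,w} \;=\; \overline{C_A^{\,\alpha}(b)}^{\,w}.
$$

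\emph{Second step.} By hypothesis $0$ belongs to the strong operator closure --- hence the weak operator closure --- of $C_{A^{**}}^{\,\bar\alpha}(b)$, so by the displayed equality $0 \in \overline{C_A^{\,\alpha}(b)}^{\,w}$. Since $0 \in A$, the recalled fact gives $0 \in \overline{C_A^{\,\alpha}(b)}^{\,w} \cap A = \overline{C_A^{\,\alpha}(b)} = C_A^{\,\alpha}(b)$ (the last equality because $C_A^{\,\alpha}(b)$ is already norm closed by definition). As $b \in A$ was arbitrary, $\alpha$ has \SAvP. Part~(i) follows verbatim with $b = 1_A$.

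\emph{Main obstacle.} The delicate point is the Kaplansky-density approximation in the first step: one must pass from strong$^*$ approximation of a unitary $w \in A^{**}$ by unitaries $v \in \cU(A)$ to weak-operator approximation of the \emph{twisted product} $wb\bar\alpha(w)^*$ by $vb\alpha(v)^*$. This requires that multiplication be jointly strong$^*$-continuous on norm-bounded sets and that $\bar\alpha$ be weak-operator (equivalently strong$^*$) continuous on bounded sets, which holds because $\bar\alpha$ is the \emph{normal} extension of $\alpha$. I expect the argument to be routine once this continuity is invoked correctly, but it is the step where care is genuinely needed, since naive weak-operator approximation of $w$ alone would not survive multiplication.
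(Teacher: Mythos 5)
Your proposal is correct and takes essentially the same route as the paper: the paper's proof likewise applies the recalled fact $\overline{C} = \overline{C}^{\,w} \cap A$ to $C(b) = \mathrm{conv}\{vb\alpha(v)^* : v \in \cU(A)\}$, together with the observation that every generator $wb\bar{\alpha}(w)^*$, $w \in \cU(A^{**})$, lies in the weak operator closure of $C(b)$. The only difference is that the paper asserts this last fact without proof, whereas you supply its justification (Kaplansky density for unitaries, joint strong$^*$-continuity of multiplication on bounded sets, and strong$^*$-continuity of the normal extension $\bar{\alpha}$ on bounded sets), which is exactly the intended argument.
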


\begin{proof} This follows from the remark above applied to $C = \mathrm{conv}\{v \alpha(v)^* : v \in \cU(\cA)\}$, respectively, to  $C(b) = \mathrm{conv}\{vb \alpha(v)^* : v \in \cU(\cA)\}$, and the fact that $v \bar{\alpha}(v)^*$ and $vb \bar{\alpha}(v)^*$  belong to the weak operator closure of $C$, respectively, $C(b)$, for all $v \in \cU(\cA^{**})$. 
\end{proof}

\noindent As an immediate corollary to Lemmas \ref{lm:D} and \ref{lm:E} we obtain:

\begin{corollary} \label{cor:A} Let $\alpha$ be an automorphism on a unital \Cs{} $\cA$. If $\bar{\alpha}$ is properly outer, then $\alpha$ has \SAvP. 
\end{corollary}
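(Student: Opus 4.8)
The plan is to chain together Lemmas~\ref{lm:D} and~\ref{lm:E}, which between them reduce the statement to a mechanical verification. The key observation is that $\cA^{**}$ is itself a von Neumann algebra and $\bar{\alpha}$ is an automorphism on it, so the hypothesis that $\bar{\alpha}$ be properly outer is exactly the input required to invoke Lemma~\ref{lm:D} with $M = \cA^{**}$ and the automorphism $\bar{\alpha}$.

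First I would apply Lemma~\ref{lm:D} to $M = \cA^{**}$ and $\bar{\alpha}$. Since $\bar{\alpha}$ is properly outer by assumption, the lemma yields that $0$ lies in the strong operator closure of $C_{\cA^{**}}^{\,\bar{\alpha}}(b)$ for \emph{every} $b \in \cA^{**}$. In particular, this holds for every $b$ in the subalgebra $\cA \subseteq \cA^{**}$, so we have obtained precisely the condition that appears in the hypothesis of Lemma~\ref{lm:E}~(ii).

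Next I would feed this into Lemma~\ref{lm:E}~(ii), whose hypothesis asks exactly that $0$ belong to the strong operator closure of $C_{\cA^{**}}^{\,\bar{\alpha}}(b)$ for all $b \in \cA$, which the previous step supplies. Its conclusion is that $\alpha$ has \SAvP, which is the desired statement. Thus the corollary follows immediately, the only genuine content having been packaged into the two preceding lemmas.

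The sole point that warrants care---and the closest thing to an obstacle---is that the proper outerness hypothesis on $\bar{\alpha}$ must be read in the von Neumann algebra sense demanded by Lemma~\ref{lm:D} (outerness on each invariant central summand), rather than in the \Cs{} sense of Definition~\ref{def:prop-outer}. As the excerpt already records, these two notions of proper outerness coincide for an automorphism of a von Neumann algebra (via Connes' characterization), so no real difficulty arises. I would note only that the full strength of Lemma~\ref{lm:D} over all of $\cA^{**}$ is used merely through its restriction to elements of $\cA$, which is all that Lemma~\ref{lm:E}~(ii) consumes.
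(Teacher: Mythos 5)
Your proof is correct and is exactly the paper's argument: the paper states this corollary as an immediate consequence of Lemmas~\ref{lm:D} and~\ref{lm:E}, applying Lemma~\ref{lm:D} to $M = \cA^{**}$ with the properly outer automorphism $\bar{\alpha}$ and then invoking Lemma~\ref{lm:E}~(ii). Your additional remark that the von Neumann and \Cs{} notions of proper outerness agree on a von Neumann algebra matches the paper's own comment following Definition~\ref{def:prop-outer}.
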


\noindent The lemma below can also be derived from \cite[Theorem 4.7]{NgRobSkou:TAMS}. The special case under consideration here is quite easy, and we include its proof for completeness of the exposition.

\begin{lemma} \label{lm:G1} Let $A$ be a unital \Cs, let $a \in A$, let $\ep >0$, and let $p \in A$ be a properly infinite projection satisfying $\|pap\| \le \ep$ and $1-p \precsim p$. Then 
$$\inf \{ \|x\| : x \in C_A(a)\} \le \ep.$$
\end{lemma}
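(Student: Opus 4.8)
The plan is to exploit proper infiniteness of $p$ to \emph{dilute} the single uncontrolled corner of $a$, namely $(1-p)a(1-p)$, by spreading it across many mutually orthogonal copies of $1-p$ sitting inside $p$. Throughout I use two standard features of the Dixmier set: $C_A(a)$ is invariant under conjugation by every $v \in \cU(A)$, and it contains every ``phase-average'' $\sum_j g_j a g_j$ associated to a partition of the unit into projections $g_j$ (obtained by integrating $w_\lambda a w_\lambda^*$ over $\lambda$ in a torus, with $w_\lambda = \sum_j \lambda_j g_j$, exactly as in the proof of Lemma~\ref{lm:D} with $\alpha = \mathrm{id}$), as well as every finite average of unitary conjugates of $a$.

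First I would arrange the projections. Since $p$ is properly infinite and $1-p \precsim p$, for each $n$ one finds pairwise orthogonal projections $R_1, \dots, R_n \le p$ with $R_i \sim 1-p$: a first copy $R_1 \le p$ comes directly from $1-p \precsim p$, and the remaining orthogonal copies fit inside $p$ because proper infiniteness gives $p \succsim p \oplus \cdots \oplus p \succsim (1-p) \oplus \cdots \oplus (1-p)$ with $n$ summands. Writing $R_0 := 1-p$ and $r := p - \sum_{i=1}^n R_i \ge 0$, this produces a partition of the unit $1 = R_0 + R_1 + \cdots + R_n + r$ in which \emph{every block except $R_0$ is dominated by $p$}, together with a cyclic-shift unitary $U = \big(\sum_{i=0}^n Z_i\big) + r$, where $Z_i \colon R_i \to R_{i+1 \,(\mathrm{mod}\,n+1)}$ are partial isometries implementing the equivalences $R_0 \sim R_1 \sim \cdots \sim R_n$. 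Thus conjugation by $U$ cyclically permutes the corners $R_0 A R_0, \dots, R_n A R_n$ and fixes $rAr$.

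Next I would average in two stages. Phase-averaging over the $n+2$ blocks replaces $a$ by $a' = \sum_{i=0}^n R_i a R_i + r a r \in C_A(a)$. Since $R_1, \dots, R_n, r$ are all $\le p$, each of these diagonal blocks satisfies $\|R_i a R_i\| = \|R_i (pap) R_i\| \le \|pap\| \le \ep$ and $\|rar\| \le \ep$; only the block $D_0 := R_0 a R_0 = (1-p)a(1-p)$, of norm $\delta \le \|a\|$, is uncontrolled. Now cyclically average $b := \frac{1}{n+1}\sum_{k=0}^n U^k a' U^{-k} \in C_A(a)$. Because $a'$ is block-diagonal and conjugation by $U$ permutes the $R_i$-corners while fixing $r$, the element $b$ is again block-diagonal, and its $R_j$-corner equals $\frac{1}{n+1}\sum_{i=0}^n U^{\,j-i} D_i U^{-(j-i)}$, a normalized sum of $n+1$ unitary conjugates exactly one of which (the image of $D_0$) has norm $\delta$ while the other $n$ have norm $\le \ep$. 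Hence each $R_j$-corner of $b$ has norm at most $(\delta + n\ep)/(n+1) \le \ep + \delta/(n+1)$, while its $r$-corner is just $rar$ of norm $\le \ep$; as these corners are mutually orthogonal, $\|b\| \le \ep + \|a\|/(n+1)$. Letting $n \to \infty$ yields elements of $C_A(a)$ of norm arbitrarily close to $\ep$, which is the assertion.

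The main obstacle is precisely the corner $(1-p)a(1-p)$: it lives off $p$, so the hypothesis $\|pap\| \le \ep$ gives no information about it, and (as the case $a = 1-p$ already shows) it genuinely must be \emph{diluted} rather than merely diagonally isolated. One must resist the temptation to ``compress'' it into $p$ by an isometry, since isometric compression does not land in $C_A(a)$ — indeed $C_A(1) = \{1\}$, so no unitary averaging can shrink the norm of a fixed point. The role of proper infiniteness is exactly to supply, for every $n$, the $n$ orthogonal copies of $1-p$ inside $p$ that power the cyclic dilution; the resulting factor $1/(n+1)$ drives the uncontrolled norm $\delta$ to $0$ while keeping all controlled blocks at $\le \ep$.
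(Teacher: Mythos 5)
Your proof is correct and follows essentially the same route as the paper's: pinch $a$ to its block diagonal with respect to a partition of unity consisting of $1-p$, $n$ mutually orthogonal copies of $1-p$ inside $p$ (supplied by proper infiniteness and $1-p \precsim p$), and a leftover block, then average over unitaries permuting the equivalent blocks so that the uncontrolled corner $(1-p)a(1-p)$ gets diluted by a factor $1/(n+1)$. The only difference is cosmetic: you average over the cyclic group generated by a single shift unitary, whereas the paper averages over all $(n+1)!$ permutation unitaries fixing the leftover block; both give the bound $\ep + \|a\|/(n+1)$ and hence the lemma.
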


\begin{proof} Observe first that for each finite partition $1 = \sum_{i=1}^n p_i$ of the unit into pairwise orthogonal projections $p_i \in A$ and for each $a \in A$, we have $\sum_{i=1}^n p_iap_i \in C_A(a)$.  This is obtained by averaging with conjugates of $a$ by unitaries of the form $\sum_{i=1} \omega_i p_i$, for $\omega_i \in \T$.

Let $n \ge 1$. Since $p$ is properly infinite and  $1-p \precsim p$ we can find $n$ pairwise orthogonal subprojections of $p$ each of which is equivalent to $1-p$. In other words, 
we can  partition the unit of $A$ into projections $p_0,p_1, \dots, p_{n+1} = 1-p$ in $A$  such that $p_1 \sim p_2 \sim \cdots \sim p_{n+1}$. By the observation above we can find $b \in C_A(a)$ with $p_ibp_j = 0$, when $i \ne j$, and $p_ibp_i = p_iap_i$. Note that $\|p_iap_i\| \le \|pap\| \le \ep$, when $0 \le i \le n$. For each permutation $\sigma$ of $\{0, 1,2, \dots, n+1\}$ fixing $0$, let $u_\sigma$ be a unitary in $A$ satisfying $u_\sigma p_i u_\sigma^* = p_{\sigma(i)}$, for all $i$.
Set $$c = \frac{1}{(n+1)!} \sum_{\sigma} u_\sigma b u_{\sigma}^* \in C_A(a).$$ Then $p_icp_j = 0$, when $i \ne j$; and since $p_i u_\sigma b u_\sigma^*p_i = u_\sigma p_{\sigma^{-1}(i)} b p_{\sigma^{-1}(i)} u_\sigma^*$, we get
$$\|c\| = \max\{p_i c p_i : 0 \le i \le n+1\} \le \frac{n\ep + \|a\|}{n+1}.$$
Since $n \ge 1$ was arbitrary this proves the lemma.
\end{proof}


\begin{lemma} \label{lm:I} If $\alpha$ is a residually outer automorphism on a unital \Cs{} $\cA$, then:
\begin{enumerate}
\item $\|\alpha - \mathrm{id}_\cA \|= 2$,
\item for each $\ep>0$, there exists a positive element $h \in \cA$ with $\|h\|=1$ and $\|h\alpha(h)\|\le \ep$. 
\end{enumerate}
\end{lemma}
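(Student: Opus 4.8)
The plan is to isolate (ii) as the substantive statement and to deduce (i) from it by an explicit construction. Granting (ii), fix $\ep>0$ and choose $h\in\cA$, $h\ge0$, with $\|h\|=1$ and $\|h\alpha(h)\|\le\ep$, and set $u=\exp(i\pi h)\in\cU(\cA)$, a unitary with $\|u\|=1$. Since $\|h\|=1$ there is a state $\psi$ on $\cA$ with $\psi(1-h)<\de$ for a prescribed $\de>0$; as $0\le 1-h\le 1$ this also gives $\psi((1-h)^2)\le\psi(1-h)<\de$. I would then estimate $\psi(\alpha(h))$ by writing $\psi(\alpha(h))=\psi(h\alpha(h))+\psi((1-h)\alpha(h))$ and applying the Cauchy--Schwarz inequality to the second term, obtaining $0\le\psi(\alpha(h))\le \ep+\de^{1/2}$. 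A further application of Cauchy--Schwarz to $u+1=g(h)$, with $g(t)=e^{i\pi t}+1$ (so $|g(t)|^2\le\pi^2(1-t)$ on $[0,1]$), gives $|\psi(u)+1|\le\pi\de^{1/2}$, and likewise $|\psi(\alpha(u))-1|\le\pi(\ep+\de^{1/2})^{1/2}$ using $\alpha(u)-1=g_1(\alpha(h))$ with $g_1(t)=e^{i\pi t}-1$ and $|g_1(t)|^2\le\pi^2 t$. Hence $\|\alpha-\mathrm{id}_\cA\|\ge|\psi(\alpha(u)-u)|\ge 2-\pi\de^{1/2}-\pi(\ep+\de^{1/2})^{1/2}$; letting $\ep,\de\to 0$ yields $\|\alpha-\mathrm{id}_\cA\|=2$, the reverse inequality being automatic.

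The heart of the matter is therefore (ii): producing a single positive norm-one $h\in\cA$ that is almost orthogonal to $\alpha(h)$. This is exactly the whole-algebra instance of the characterization of proper outerness recalled after Definition~\ref{def:prop-outer} (the condition from \cite[Theorem 6.1]{OlePed:C*-dynamicIII}, \cite[Lemma 7.1]{OlePed:C*-dynamicIII}), and for simple $\cA$ it is Kishimoto's theorem \cite{Kis:Auto}. I would aim to show that residual outerness forces this almost-orthogonality, arguing by contraposition: if there were $\ep_0>0$ with $\|h\alpha(h)\|\ge\ep_0$ for every $h\in\cA$, $h\ge0$, $\|h\|=1$, I would try to manufacture a non-zero $\alpha$-invariant ideal $I$ on whose quotient $\overset{\centerdot}{\alpha}$ is inner, contradicting residual outerness. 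The natural device is Connes' characterization in the form of Lemma~\ref{lm:D0}: the failure of almost-orthogonality should locate a non-zero central projection of $\cA^{**}$ on which $\bar\alpha$ is implemented by a unitary, and one then seeks to descend this innerness to a genuine invariant quotient of $\cA$.

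The main obstacle is precisely this descent. As stressed in the introduction, innerness of $\bar\alpha$ on a central summand of the bidual need not reflect innerness of $\overset{\centerdot}{\alpha}$ on any quotient of $\cA$ (it does so for simple quotients, by Kishimoto's theorem), and the weak-$*$ density of the unit ball of $\cA_+$ in that of $(\cA^{**})_+$ is of no direct help, since the norm is only weak-$*$ lower semicontinuous and this controls the wrong direction of the inequality $\|P\bar\alpha(P)\|\ge\ep_0$. I expect the resolution to run through invariant quotients rather than the bidual: replace $\cA$ by $\cA/I$ for a maximal $\alpha$-invariant ideal to obtain an $\alpha$-simple quotient with outer induced automorphism, apply the local almost-orthogonality of \cite{Kis:Auto} there, and organize the resulting elements by a maximality argument in the spirit of Lemma~\ref{lm:D0}(ii). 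The delicate point, and the one to be handled with the most care, is transferring the estimate back to $\cA$ itself rather than merely modulo $I$, since a positive lift of an almost-orthogonal element of $\cA/I$ controls $\|h\alpha(h)\|$ only up to $I$.
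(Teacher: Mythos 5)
Your reduction of (i) to (ii) is a correct computation (it is in substance the easy direction of the Olesen--Pedersen sine--cosine theorem), but the proposal never establishes (ii), which you yourself identify as the heart of the matter. Both strategies you sketch stall at obstacles you name but do not overcome, and those obstacles are real. The bidual route fails for exactly the reason you state: innerness of $\bar\alpha$ on a central summand of $\cA^{**}$ does not descend to innerness of $\overset{\centerdot}{\alpha}$ on any quotient of $\cA$. The quotient route fails twice over: a maximal $\alpha$-\emph{invariant} ideal yields only an $\alpha$-simple quotient, not a simple one, so Kishimoto's theorem does not apply there; and even granting an almost-orthogonal element of $\cA/I$, a positive lift controls $\|h\alpha(h)\|$ only modulo $I$, not in norm, as you concede. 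So what you have is an unproved reduction, not a proof.

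The paper's argument runs in the opposite logical direction and sidesteps every one of these issues. It proves (i) \emph{directly}: if $\|\alpha-\mathrm{id}_\cA\|<2$, then by Kadison--Ringrose $\alpha$ is the exponential of a bounded derivation, hence ideal preserving, so one may pass to the quotient by a maximal proper ideal --- maximal among \emph{all} ideals, not just invariant ones; this is the maneuver your sketch misses, and it is what produces a genuinely \emph{simple} unital quotient. The descended automorphism is still the exponential of a bounded derivation, hence inner by Sakai's theorem, contradicting residual outerness. Note that nothing needs to be lifted back from the quotient: what returns is a contradiction with residual outerness, which is by definition a statement about quotients. Then (ii) is deduced from (i) by invoking the hard direction of the Olesen--Pedersen sine--cosine theorem \cite[Theorem 5.1]{OlePed:C*-dynamicIII}: once $\|\alpha-\mathrm{id}_\cA\|=2$, there exists for every $\ep>0$ a positive norm-one $h\in\cA$ with $\|h\alpha(h)\|\le\ep$. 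To repair your proposal you must either reprove that theorem or cite it; but once you cite it, your (ii)$\Rightarrow$(i) computation becomes redundant, since the theorem makes (i) and (ii) equivalent and the efficient path is to establish (i) first.
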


\begin{proof} (i). If $\|\alpha - \mathrm{id}_\cA\| < 2$, then $\alpha$ is the exponential of a bounded derivation, \cite[Theorem 7]{Kadison-Ringrose}, see also \cite[8.7.7]{Ped:auto}, and approximately inner, which again implies that $\alpha$ is ideal preserving. Take a maximal proper closed two-sided ideal $I$ in $\cA$, and consider the automorphism $\overset{\centerdot}{\alpha}$ on the simple unital \Cs{} $\cA/I$. Then $\overset{\centerdot}{\alpha}$ is still the exponential of a bounded derivation, so by Sakai's theorem, \cite{Sakai:inner}, it is inner. Hence $\alpha$ is not residually outer.

(ii) follows from (i) and the Olesen--Pedersen ``sine-cosine'' theorem, \cite[Theorem 5.1]{OlePed:C*-dynamicIII}.
\end{proof}

\noindent We are now ready to characterize  automorphisms with the strong averaging property.

\begin{theorem} \label{thm:A} Let $\alpha$ be an automorphism on a unital \Cs{} $A$. In parts (ii) and (iii) assume further that $A$ is separable. If $A$ has no tracial state, then $A^{**}_\fin$ is zero in which case we declare the condition below on $\bar{\alpha}_\fin$ to be vacuously satisfied.
\begin{enumerate}
\item If $\alpha$ has \SAvP, then $\alpha$ is  residually outer and  $\bar{\alpha}_\fin$ is properly outer.
\item Conversely, if $\alpha$ is residually outer and $\bar{\alpha}_\fin$ is properly outer, then $\Ad_u \circ \alpha$ has \AvP, for all unitaries $u \in A$.
\item If $\alpha$ is residually properly outer and $\bar{\alpha}_\fin$ is properly outer, then $\alpha$ has \SAvP.
\end{enumerate}
\end{theorem}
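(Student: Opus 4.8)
The plan is to verify the strong operator criterion of Lemma~\ref{lm:E}(ii): it suffices to show that for every $b \in A$ and every $\ep > 0$ the set $C_{A^{**}}^{\,\bar{\alpha}}(b)$ contains an element of norm less than $\ep$, since any such element witnesses that $0$ lies in the strong operator closure. The engine producing these small elements is the averaging already used in the proof of Lemma~\ref{lm:D}: for any family $(e_k)_k$ of pairwise orthogonal projections in $A^{**}$ with $\sum_k e_k = 1$, integrating the conjugates $u_{F,\omega}\, b\, \bar{\alpha}(u_{F,\omega})^*$ over the tori $\T^F$ places $\sum_k e_k b \bar{\alpha}(e_k)$ in the strong operator closure of $C_{A^{**}}^{\,\bar{\alpha}}(b)$; since the $e_k$, and ($\bar{\alpha}$ being an automorphism) also the $\bar{\alpha}(e_k)$, are pairwise orthogonal, one has $\big\|\sum_k e_k b\bar{\alpha}(e_k)\big\| = \sup_k \|e_k b \bar{\alpha}(e_k)\|$. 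Everything thus reduces to producing, for each $\ep$, a partition of the unit of $A^{**}$ into projections $e_k$ with $\|e_k b \bar{\alpha}(e_k)\| < \ep$.

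I would build this partition separately on the two $\bar{\alpha}$-invariant central summands of $A^{**} = A^{**}_\fin \oplus A^{**}_\Inf$, whose central projection $z$ and its complement $z^\perp$ are fixed by $\bar{\alpha}$; since a partition of $z$ together with a partition of $z^\perp$ is a partition of $1$, and the corresponding diagonal element is the orthogonal sum of the two contributions and so has norm the maximum of the two, it is enough to treat each summand. On the finite summand the hypothesis is exactly what Lemma~\ref{lm:D0}(ii) needs: applied inside the von Neumann algebra $zA^{**}$, on which $\bar{\alpha}$ restricts to the properly outer $\bar{\alpha}_\fin$, to the element $zb$, it yields pairwise orthogonal projections under $z$ summing to $z$ with $\|p_i\, (zb)\, \bar{\alpha}(p_i)\| < \ep$. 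So the finite part is immediate.

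The whole difficulty sits in the properly infinite summand $z^\perp A^{**}$, where $\bar{\alpha}_\Inf$ need not be properly outer---indeed Example~\ref{ex:B} shows that proper outerness of the full extension is strictly stronger than \SAvP---so Lemma~\ref{lm:D0} is unavailable and we must exploit proper infiniteness instead. This is the role of Lemma~\ref{lm:G1}, which turns a single properly infinite projection $p$ with $z^\perp - p \precsim p$ and small corner into an averaging down to zero, and which needs no outerness of the ambient von Neumann extension. The plan is therefore to feed residual proper outerness of $\alpha$ on $A$ into the Olesen--Pedersen machinery behind Lemma~\ref{lm:I}(ii) (here the separability of $A$ enters), in the strengthened form recalled above that keeps an extra factor $b$, so as to find positive norm-one elements $h$ with $\|hb\alpha(h)\|$ small in prescribed hereditary subalgebras; a spectral projection of such an $h$ near the value $1$ has small $b$-twisted corner, and proper infiniteness of $z^\perp A^{**}$ lets one inflate it to a properly infinite $p \le z^\perp$ with $z^\perp - p \precsim p$ and $\|pb\bar{\alpha}(p)\|$ still controlled, at which point Lemma~\ref{lm:G1}---applied inside $z^\perp A^{**}$, or via the crossed-product reformulation of Proposition~\ref{prop:0}---finishes this summand.

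I expect the transfer in this last paragraph to be the main obstacle: proper outerness of $\alpha$ on $A$ does not pass to $\bar{\alpha}$, so the smallness on the infinite summand cannot be quoted from Lemma~\ref{lm:D} and must be extracted from $A$ itself through Olesen--Pedersen and then promoted to the bidual, all while controlling both the spectral-projection estimate and the comparison $z^\perp - p \precsim p$ that Lemma~\ref{lm:G1} requires. Once the infinite summand is handled, its partition and the finite one combine, by the first paragraph, to show that $0$ lies in the strong operator closure of $C_{A^{**}}^{\,\bar{\alpha}}(b)$, and Lemma~\ref{lm:E}(ii) then yields \SAvP.
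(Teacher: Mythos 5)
Your proposal only engages part (iii) of the theorem. Part (i) could be dispatched by citing Propositions~\ref{prop:A1} and \ref{prop:B1}, but part (ii) cannot be absorbed into your argument at all: its hypothesis is only \emph{residual outerness}, whereas your route needs the Olesen--Pedersen element $h$ with $\|hb\alpha(h)\|$ small for arbitrary $b$, which is available only under \emph{proper} outerness of the quotient automorphisms. The paper runs (ii) in parallel with (iii) by substituting Lemma~\ref{lm:I}~(ii) (the case $b=1$, valid under residual outerness alone) for that input; some replacement of this kind is needed and is missing from your plan.

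For (iii) itself, your finite summand argument via Lemma~\ref{lm:D0}~(ii) is fine, but the step you yourself flag as ``the main obstacle'' on $z^\perp A^{**}$ is a genuine gap, and the proposed fix --- ``inflating'' the spectral projection $p$ of $h$ to a properly infinite projection with $z^\perp - p \precsim p$ while keeping $\|pb\bar{\alpha}(p)\|$ controlled --- cannot work. The element $h$ extracted from $A$ may sit in a corner whose central support $q_1$ is a proper subprojection of $z^\perp$; then $z^\perp - p$ dominates the central projection $z^\perp - q_1$, and since Murray--von Neumann subequivalence preserves central supports, $z^\perp - p \precsim p$ is impossible no matter how $p$ is enlarged inside that corner, while enlarging $p$ beyond it destroys the norm estimate. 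The paper avoids this with two devices absent from your proposal. First, it does not split along finite/infinite but along properly outer/inner: $A^{**} = (1-q)A^{**} \oplus qA^{**}$ with $\bar{\alpha}_{1-q}$ properly outer (so Lemma~\ref{lm:D} covers it, \emph{including} the infinite-type part of $(1-q)A^{**}$, where your uniform Lemma~\ref{lm:G1} strategy would also stall) and $\bar{\alpha}_q = \Ad_w$ inner, with $qA^{**}$ automatically properly infinite by the hypothesis on $\bar{\alpha}_\fin$; on $qA^{**}$ the twisted hull becomes an ordinary Dixmier hull of $\bar{b}qw$ up to a fixed unitary factor, so Lemma~\ref{lm:G1} applies as stated --- whereas on your summand $\bar{\alpha}_\Inf$ need not be inner, so you would additionally have to prove a twisted variant of Lemma~\ref{lm:G1} (doable, by checking its proof only uses unitaries built from the partition, but not addressed). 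Second, and decisively, the paper localizes by a maximality argument over central subprojections: it suffices to find, below every non-zero central subprojection $q_0$ of $q$, some non-zero central $q_1 \le q_0$ on which the infimum of norms over the hull is at most $\ep$; the projection $p$ is then compared only against \emph{its own central support} $q_1$, where $q_1 - p \precsim p$ does hold (using that $p$ is properly infinite and $\kappa_{A^{**}} \le \aleph_0$, which is where separability enters), and the resulting small elements over a maximal orthogonal family of such $q_1$'s are summed strongly. Without this localization, the global comparison you require on $z^\perp$ is unattainable, so the proposal as written does not close.
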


\begin{proof} (i) has already been established in Propositions~\ref{prop:A1} and \ref{prop:B1}.

If $\alpha$ is residually outer and $\bar{\alpha}_\fin$ is properly outer, then so is $\Ad_u \circ \alpha$, for all unitaries $u \in A$. Hence, to prove (ii), it suffices to show that $\alpha$ has \AvP, under the assumptions in (ii).

We combine the proofs of (ii) and (iii), since they are similar. For each $\bar{\alpha}$-invariant central projection $q$ in $A^{**}$, consider the restriction $\bar{\alpha}_q$ of $\bar{\alpha}$ to $qA^{**}$, and let $\pi_q \colon A \to qA^{**}$ be the natural \sh{} arising as the inclusion $A \subseteq A^{**}$ composed with the quotient mapping $A^{**} \to qA^{**}$. 

The strategy is to prove that $\bar{\alpha}$ has the SOT-\AvP, respectively, the SOT-\SAvP, by which we mean that $0$ belongs to the strong operator closure of $C_{A^{**}}^{\, \bar{\alpha}}$, respectively, of $C_{A^{**}}^{\, \bar{\alpha}}(b)$, for all $b \in A$, and then apply Lemma~\ref{lm:E}. 

We assume that $\bar{\alpha}_\fin$ is properly outer, which is common for (ii) and (iii). Accordingly, we can write $\cA^{**} = q \cA^{**} + (1-q) \cA^{**}$, with $q$ an $\bar{\alpha}$-invariant central projection in $\cA^{**}$ such that $\bar{\alpha}_{1-q}$ is properly outer and $\bar{\alpha}_q = \Ad_w$ for some unitary $w \in  q \cA^{**}$. By the assumption that $\bar{\alpha}_\fin$ is properly outer, either $q=0$ or $q \cA^{**}$ is properly infinite. If $q=0$, then $\bar{\alpha}$ is SOT-\SAvP{} by Corollary~\ref{cor:A}, and we are done. 

We know from Lemma~\ref{lm:D} that $\bar{\alpha}_{1-q}$ is SOT-\SAvP, and hence SOT-\AvP. The problem is therefore reduced to the properly infinite central summand $qA^{**}$, i.e., to show that $\bar{\alpha}_{q} = \Ad_w$ is SOT-\AvP, respectively, SOT-\SAvP{} as an automorphism on $qA^{**}$. Fix $\ep >0$. By a standard maximality argument it suffices to show that for each non-zero central subprojection $q_0$ of $q$ there exists a non-zero central subprojection $q_1$ of $q_0$ such that 
$$\inf\{\|x\| :x \in C_{q_1A^{**}}^{\, \bar{\alpha}_{q_1}}\} \le \ep, \quad \text{respectively,} \; \;
\inf\{\|x\| :x \in C_{q_1A^{**}}^{\, \bar{\alpha}_{q_1}}(\bar{b}q_1)\} \le \ep, \; b \in A,$$
where $\bar{b}$ is the image of $b$ in $A^{**}$. Note that
$$C_{q_1 A^{**}}^{\, \bar{\alpha}_{q_1}} =C_{q_1A^{**}}(w_1)w_1^* \quad \text{and}
\quad C_{q_1A^{**}}^{\, \bar{\alpha}_{q_1}}(\bar{b}q_1) = C_{q_1A^{**}}(w_1 \bar{b}q_1)w_1^*,$$
where $w_1=wq_1$.

The assumption in (ii) and (iii) that $A$ is separable implies that $\kappa_{A^{**}} \le \aleph_0$, see, eg., \cite[Definition 5.5 and Lemma 5.6]{ORT}. Recall that $\kappa_M$ is the largest cardinality of a set of pairwise equivalent and pairwise orthogonal non-zero projections in a von Neumann algebra $M$. If $\kappa_M \le \aleph_0$, then any two properly infinite projections in $M$ are Murray von Neumann equivalent if and only if they have the same central support.

Assume now that $\alpha$ is residually outer. Let $\ep >0$ be as given above. By Lemma~\ref{lm:I}, applied to the quotient $\pi_{q_0}(A)$ of $A$, there exists $h \in \pi_{q_0}(A)^+$ such that $\|h\|=1$ and $\|h \bar{\alpha}_{q_0}(h)\| \le \ep/3$. Choose continuous functions $g_1,g_2$ on $[0,1]$ such that $0 \le g_2 \le g_1 \le 1$, $g_2(1)=1$, $g_1g_2=g_2$ and $\mathrm{supp}(g_1) \subseteq [1-\ep/3,1]$. Choose a non-zero projection $p$ in the closed hereditary sub-\Cs{} of $q_0A^{**}$ generated by $g_2(h)$. Then $pg_1(h)=p$ and $\|g_1(h)(1-h)\| \le \ep/3$, which implies that $\|p(1-h)\| \le \ep/3$.  It follows that 
$$\|pw_0p\| =\|p \bar{\alpha}_{q_0}(p)w_0\| = \|p \bar{\alpha}_{q_0}(p)\| \le  \|p \bar{\alpha}_{q_0}(p) - p h\bar{\alpha}_{q_0}(hp)\| + \ep/3 \le  \ep.$$
Let $q_1$ be the central support of $p$. Then $q_1-p \precsim p$ (since $p$ is a properly infinite projection whose central support dominates the central support of $q_1-p$). Moreover, $\|pw_1p\| \le \ep$ (with $w_1=wq_1$), so it follows from Lemma~\ref{lm:G1}, applied to the unital \Cs{} $q_1A^{**}$, that 
$$\inf\{\|x\| :x \in C_{q_1A^{**}}^{\, \bar{\alpha}_{q_1}}\} = \inf\{\|x\| :x \in C_{q_1A^{**}}(w_1)\} \le \ep,$$
as desired. This proves (ii).

To prove (iii) we assume that $\alpha$ is residually properly outer. Given $b \in A$, which we may assume to be a contraction, and with $\ep$ as given above, we can find $h \in \pi_{q_0}(A)^+$ such that $\|h\|=1$ and $\|h \bar{b} q_0 \bar{\alpha}_{q_0}(h)\| \le \ep/3$, cf.\ the comments below Definition~\ref{def:prop-outer}, where $\bar{b}$ as above is the image of $b$ in $A^{**}$. Choose the projection $p$ as in the previous paragraph (with respect to the same functions $g_1,g_2$, now applied to the present $h$). Then, arguing as above, 
$$\|p\bar{b}q_0w_0p\| = \|p \bar{b}q_0 \bar{\alpha}_{q_0}(p)\| \le  \|p \bar{b}q_0\bar{\alpha}_{q_0}(p) - p hb_0\bar{\alpha}_{q_0}(hp)\| + \ep/3 \le  \ep.$$
Take again $q_1$ to be the central support of $p$. Then $\|p\bar{b}q_1 w_1p\| \le \ep$ and $q_1-p \precsim p$, so Lemma~\ref{lm:G1} yields that
$$\inf\{\|x\| :x \in C_{q_1A^{**}}^{\, \bar{\alpha}_{q_1}}(\bar{b}q_1)\} = \inf\{\|x\| :x \in C_{q_1A^{**}}(\bar{b}q_1w_1)\} \le \ep,$$
as desired. This proves (iii).
\end{proof}

\noindent In the cases where the \Cs{} $A$ in Theorem~\ref{thm:A} above is either simple, of stable rank one, or commutative, one can improve the statement of the theorem as in the corollaries below. Our first corollary addresses the case where $A$ is simple:

\begin{corollary} \label{cor:A}  Let $A$ be a simple separable unital \Cs, and let $\alpha$ be an automorphism on $A$. 
\begin{enumerate}
\item If $A$ admits a tracial state, then $\alpha$ has \SAvP{} if and only if $\bar{\alpha}_\fin$ is properly outer on $A^{**}_\fin$.
\item If $A$ does not have a tracial state,  then $\alpha$ is \SAvP{} if and only if $\alpha$ is outer on $A$.
\end{enumerate}
\end{corollary}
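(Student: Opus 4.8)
The plan is to read off both statements from Theorem~\ref{thm:A}, exploiting the simplifications that simplicity of $A$ forces on the various outerness hypotheses. The first thing I would record is the following reduction: for a simple unital \Cs{} the only $\alpha$-invariant ideals are $0$ and $A$, so \emph{residually outer} means simply \emph{outer} and \emph{residually properly outer} means simply \emph{properly outer}. Together with the cited consequence of Sakai's theorem---every outer automorphism of a simple unital \Cs{} is automatically properly outer---this shows that in the present setting the conditions ``outer'', ``properly outer'', ``residually outer'' and ``residually properly outer'' all coincide. It is this collapse that makes the necessary condition in Theorem~\ref{thm:A}(i) and the sufficient condition in Theorem~\ref{thm:A}(iii) match up.

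For part (ii), $A$ has no tracial state, so $A^{**}_\fin = 0$ and the proper outerness condition on $\bar{\alpha}_\fin$ is vacuous. If $\alpha$ has \SAvP, then $\alpha$ is residually outer by Theorem~\ref{thm:A}(i), hence outer by the reduction. Conversely, if $\alpha$ is outer, then it is residually properly outer, and since the $\bar{\alpha}_\fin$ hypothesis holds vacuously, Theorem~\ref{thm:A}(iii) gives \SAvP. This settles (ii).

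For part (i), $A$ admits a tracial state, so $A^{**}_\fin \ne 0$. The implication ``\SAvP{} $\Rightarrow$ $\bar{\alpha}_\fin$ properly outer'' is precisely the content of Theorem~\ref{thm:A}(i). For the converse I want to invoke Theorem~\ref{thm:A}(iii), which requires $\alpha$ to be residually properly outer, i.e.\ (by the reduction) outer; so the task is to deduce outerness of $\alpha$ from proper outerness of $\bar{\alpha}_\fin$. I would argue by contraposition: if $\alpha = \Ad_u$ is inner with $u \in \cU(A)$, then $\bar{\alpha} = \Ad_u$ on $A^{**}$, and writing $z$ for the non-zero central projection with $A^{**}_\fin = zA^{**}$, normality of $\bar{\alpha}$ gives $\bar{\alpha}(z) = z$, whence $uz = zu$ and $\bar{\alpha}_\fin = \Ad_{uz}$ is inner on the summand $A^{**}_\fin$. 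Since $A^{**}_\fin \ne 0$, this makes $\bar{\alpha}_\fin$ non-outer on the invariant central summand $A^{**}_\fin$, hence not properly outer, contrary to hypothesis. Therefore $\bar{\alpha}_\fin$ properly outer forces $\alpha$ outer, and then Theorem~\ref{thm:A}(iii) yields \SAvP.

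The only real obstacle is this converse half of part (i), the passage from proper outerness of $\bar{\alpha}_\fin$ back to outerness of $\alpha$. I expect the care to lie in two points that must be invoked cleanly: that $A^{**}_\fin \ne 0$ exactly because $A$ has a tracial state, so that the contradiction is genuine, and that innerness on $A$ automatically lifts to innerness on the finite central summand of $A^{**}$. Notably this uses only the trivial lifting of innerness to the bi-dual, and not Kishimoto's deeper theorem that innerness on the bi-dual descends to $A$.
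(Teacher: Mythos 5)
Your proof is correct and follows essentially the same route as the paper: necessity from Theorem~\ref{thm:A}~(i), and sufficiency by collapsing all the outerness notions via simplicity and Sakai's theorem so that Theorem~\ref{thm:A}~(iii) applies. Your explicit contraposition argument (innerness of $\alpha$ lifts trivially to innerness of $\bar{\alpha}_\fin$ on the non-zero summand $A^{**}_\fin$) merely spells out the step the paper leaves implicit in the phrase ``then $\alpha$ is outer,'' and you correctly note that only this trivial direction, not Kishimoto's theorem, is needed.
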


\begin{proof} The ``only if'' parts of (i) and (ii) both follow from Theorem~\ref{thm:A}~(i). If $A$ has a tracial state and $\bar{\alpha}_\fin$ is properly outer, then $\alpha$ is outer (hence properly outer, hence residually properly outer, because $A$ is simple), so it has \SAvP{} by Theorem~\ref{thm:A}~(iii). If $A$ has no trace, then the condition on $\bar{\alpha}_\fin$ becomes vacuous, and by the same argument as above we conclude that that every outer automorphism on $A$ has \SAvP.
\end{proof}

\begin{corollary} \label{cor:B} For an automorphism $\alpha$ on a unital separable \Cs{} $A$ of stable rank one, the following are equivalent:
\begin{enumerate}
\item $\alpha$ has \SAvP,
\item $\mathrm{Ad}_u \circ \alpha$ has \AvP, for all unitaries $u \in A$,
\item $\alpha$ is residually outer and $\bar{\alpha}_\fin$ is properly outer.
\end{enumerate}
\end{corollary}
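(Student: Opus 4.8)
The plan is to close the cyclic chain of implications (i) $\Rightarrow$ (iii) $\Rightarrow$ (ii) $\Rightarrow$ (i), assembling the three principal results already established and tracking where the stable rank one and separability hypotheses enter. Since each of the three conditions has by now been tied to the others in one of the earlier statements, the whole corollary should reduce to citing them in the right order.

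First, the implication (i) $\Rightarrow$ (iii) is immediate from Theorem~\ref{thm:A}~(i): any automorphism with \SAvP{} on a unital \Cs{} is automatically residually outer and has $\bar{\alpha}_\fin$ properly outer, and this direction needs no extra structural hypothesis on $A$. Next, (iii) $\Rightarrow$ (ii) is exactly the content of Theorem~\ref{thm:A}~(ii): residual outerness together with proper outerness of $\bar{\alpha}_\fin$ forces $\Ad_u \circ \alpha$ to have \AvP{} for every unitary $u \in A$. This is the step where the standing separability assumption of the corollary is spent, since the proof of Theorem~\ref{thm:A}~(ii) relied on $\kappa_{A^{**}} \le \aleph_0$, which in turn came from separability of $A$.

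Finally, (ii) $\Rightarrow$ (i) is precisely the reverse implication of Proposition~\ref{prop:sr1}, and this is where the hypothesis that $A$ has stable rank one is used: the factorization of an arbitrary $b \in A$ as $\tfrac{\|b\|}{3}(u_1+u_2+u_3)$ with $u_i$ unitary, combined with the averaging already available for each $\Ad_{u_1} \circ \alpha$, drives the quantity $\inf\{\|x\| : x \in C_A^{\,\alpha}(b)\}$ geometrically to zero under iteration, yielding $0 \in C_A^{\,\alpha}(b)$ for every $b$. This last arrow completes the triangle and hence establishes the equivalence of (i), (ii) and (iii).

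Because every arrow is a direct quotation of an earlier result, there is essentially no genuine obstacle in the corollary itself; the only point requiring care is the bookkeeping, namely checking that the stable rank one and separability hypotheses are simultaneously in force so that Proposition~\ref{prop:sr1} and Theorem~\ref{thm:A} may both be invoked for the same $\alpha$ — which they are, by the standing assumptions of the corollary. If one were to dig beneath the citations, the most substantive ingredient would be the reverse direction of Proposition~\ref{prop:sr1}, where the stable-rank-one decomposition into three unitaries does the real work; at the level of this corollary, however, that argument is already packaged and used as a black box.
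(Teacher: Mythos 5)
Your proof is correct and follows essentially the same route as the paper: the paper's own proof is exactly the same cyclic assembly, citing Theorem~\ref{thm:A}~(i) for (i)~$\Rightarrow$~(iii), Theorem~\ref{thm:A}~(ii) for (iii)~$\Rightarrow$~(ii), and Proposition~\ref{prop:sr1} for the passage from (ii) back to (i). Your bookkeeping of where separability (in Theorem~\ref{thm:A}~(ii), via $\kappa_{A^{**}} \le \aleph_0$) and stable rank one (in the reverse direction of Proposition~\ref{prop:sr1}) are spent is accurate and matches the paper's use of these hypotheses.
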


\begin{proof} (iii) $\Rightarrow$ (ii) is Theorem~\ref{thm:A}  (ii); (ii) $\Leftrightarrow$ (i) is Proposition~\ref{prop:sr1}, and (i) $\Rightarrow$ (iii) is Theorem~\ref{thm:A} (i).
\end{proof}

\noindent The (strong) averaging property is equivalent to freeness on commutative \Cs s:

\begin{corollary} \label{cor:C} 
Let $X$ be a compact Hausdorff space and consider the unital \Cs{} $\cA = C(X)$. The following conditions are equivalent for each automophism $\alpha$ on $\cA$.
\begin{enumerate}
\item $\alpha$ has \AvP,
\item $\alpha$ has \SAvP,
\item the induced homeomorphism $\hat{\alpha}$ of $\alpha$ on $X$ is free,
\item the extension $\bar{\alpha}$ of $\alpha$ to $A^{**}$ is properly outer.
\end{enumerate}
\end{corollary}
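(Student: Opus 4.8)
The plan is to prove the cycle of implications
$(\mathrm{i})\Rightarrow(\mathrm{iii})\Rightarrow(\mathrm{iv})\Rightarrow(\mathrm{ii})\Rightarrow(\mathrm{i})$, which makes all four conditions equivalent, each in fact equivalent to the assertion that the induced homeomorphism $\hat{\alpha}$ (normalized so that $\alpha(f)=f\circ\hat{\alpha}^{-1}$, with unitaries of $\cA=C(X)$ being the continuous maps $X\to\T$) has no fixed point; this is what ``free'' should mean here. Two of the four implications are essentially immediate. The implication $(\mathrm{ii})\Rightarrow(\mathrm{i})$ is just the special case $b=1_\cA$ in the definition of \SAvP. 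The implication $(\mathrm{iv})\Rightarrow(\mathrm{ii})$ is precisely the corollary following Lemma~\ref{lm:E} (if $\bar{\alpha}$ is properly outer then $\alpha$ has \SAvP); crucially this carries no separability hypothesis, so it applies to an arbitrary compact Hausdorff $X$ and lets us bypass the separability assumption in Theorem~\ref{thm:A}.

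For $(\mathrm{i})\Rightarrow(\mathrm{iii})$ I would argue by contraposition. Suppose $\hat{\alpha}(x_0)=x_0$. For every $v\in\cU(\cA)$ the generator $v\alpha(v)^*$ has value $v(x_0)\overline{v(x_0)}=1$ at $x_0$; since evaluation at $x_0$ is a continuous linear functional, every element of the closed convex hull $C_\cA^{\,\alpha}$ has value $1$ at $x_0$, hence norm at least $1$. Thus $0\notin C_\cA^{\,\alpha}$ and \AvP{} fails. Consequently \AvP{} forces $\hat{\alpha}$ to be fixed-point free, i.e.\ free.

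The implication $(\mathrm{iii})\Rightarrow(\mathrm{iv})$ is the real content. Assume $\hat{\alpha}$ has no fixed point. For each $x\in X$, Hausdorffness yields an open $U\ni x$ with $U\cap\hat{\alpha}(U)=\emptyset$, and compactness extracts a finite subcover $U_1,\dots,U_N$. Let $\chi_j\in\cA^{**}$ be the open projection associated with $U_j$. These satisfy $\bigvee_j\chi_j=1$ (as $\bigcup_j U_j=X$), $\bar{\alpha}(\chi_j)=\chi_{\hat{\alpha}(U_j)}$, and $\chi_j\bar{\alpha}(\chi_j)=0$ (disjoint open sets give orthogonal open projections). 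Given a non-zero projection $P\in\cA^{**}$, since $\bigvee_j\chi_j=1$ and $\cA^{**}$ is abelian we cannot have $P\chi_j=0$ for all $j$; choose $j$ with $Q:=P\chi_j\neq 0$. Then $Q\le P$, $Q\le\chi_j$, and $Q\bar{\alpha}(Q)\le\chi_j\bar{\alpha}(\chi_j)=0$, so $\|Q\bar{\alpha}(Q)\|=0$. By Connes' characterization of proper outerness (recalled just before Lemma~\ref{lm:D0}), $\bar{\alpha}$ is properly outer. (For reassurance, the converse of this step also holds: a fixed point $x_0$ produces the support projection of the pure normal state extending the evaluation at $x_0$, a minimal invariant projection on which $\bar{\alpha}$ acts as the identity, so $(\mathrm{iv})$ genuinely encodes the no-fixed-point condition; this direction is not needed for the cycle.)

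The main obstacle I expect is exactly this step $(\mathrm{iii})\Rightarrow(\mathrm{iv})$, and within it the localization of an \emph{arbitrary} projection $P\in\cA^{**}$ into one member of the open cover. This is where the open-projection calculus together with Connes' criterion does the work, and it is what allows the argument to run for non-metrizable $X$ without any appeal to the separability-dependent Theorem~\ref{thm:A}. The remaining implications reduce to bookkeeping and the already-established results quoted above.
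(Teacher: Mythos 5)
Your proof is correct, and on most of the cycle it coincides with the paper's: (ii)$\Rightarrow$(i) is trivial in both; your (i)$\Rightarrow$(iii) (evaluation at a fixed point of $\hat{\alpha}$ pins every element of $C_\cA^{\,\alpha}$ at the value $1$) is exactly the paper's invariant-pure-state argument in concrete form; and your (iv)$\Rightarrow$(ii) is precisely the paper's corollary to Lemmas~\ref{lm:D} and \ref{lm:E}, which indeed carries no separability hypothesis, so the non-metrizable case is handled the same way in both proofs. The one genuinely different step is (iii)$\Rightarrow$(iv). The paper proves it by contraposition, softly: if $\bar{\alpha}$ fails to be properly outer, then (as $A^{**}$ is abelian) it is the \emph{identity} on some non-zero invariant central summand $qA^{**}$, whence $\pi_q\circ\alpha=\pi_q$, and pulling back any pure state of $\pi_q(\cA)$ gives an $\alpha$-invariant pure state of $C(X)$, i.e.\ a fixed point of $\hat{\alpha}$. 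You instead argue directly: fixed-point freeness plus compactness produces a finite open cover $U_1,\dots,U_N$ with $U_j\cap\hat{\alpha}(U_j)=\emptyset$, and the associated open projections (whose stated properties --- $\bigvee_j\chi_j=1$, $\bar\alpha(\chi_j)=\chi_{\hat\alpha(U_j)}$, $\chi_j\bar\alpha(\chi_j)=0$ --- all check out, e.g.\ via a partition of unity subordinate to the cover) let you verify Connes' criterion in its strongest form: every non-zero projection $P$ dominates a non-zero $Q$ with $Q\bar{\alpha}(Q)=0$ exactly, not merely of small norm. Your route costs more machinery (open-projection calculus, Connes' characterization) where the paper needs only quotients and pure states, but it buys a constructive, quantitative statement and makes the passage from freeness to \SAvP{} self-contained modulo Lemmas~\ref{lm:D} and \ref{lm:E}. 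One cosmetic remark: the paper also observes that (i)$\Rightarrow$(ii) holds directly for abelian $\cA$, since $vb\alpha(v)^*=b\,v\alpha(v)^*$ gives $C_\cA^{\,\alpha}(b)=b\cdot C_\cA^{\,\alpha}$; you obtain this implication only by going around the cycle, which is equally valid.
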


\noindent
Since $A^{**}$ is abelian, (iv) just says that $\bar{\alpha}$ is not the identity on any central summand of $A^{**}$. The proof below does not use the full force of Theorem~\ref{thm:A}.

\begin{proof} (ii) $\Rightarrow$ (i) holds trivially (and (i) $\Rightarrow$ (ii) holds when $\cA$ is abelian). (iv) $\Rightarrow$ (i) follows from Lemmas~\ref{lm:D} and \ref{lm:E}.

(i) $\Rightarrow$ (iii). Suppose that $\hat{\alpha}$ is not free. Then there is a pure state $\rho$ on $\cA$ which is invariant under $\alpha$. Since pure states on abelian \Cs s are multiplicative, we get $\rho(v\alpha(v)^*) = 1$, for all unitaries $v$ in $\cA$. Hence $\alpha$ cannot have \AvP.

(iii) $\Rightarrow$ (iv).  Assume that $\bar{\alpha}$ is not properly outer. Then there is an invariant non-zero central projection $q \in \cA^{**}$ such that $\bar{\alpha}$ is inner on $\cA^{**}q$. Since $\cA^{**}$ is abelian, this implies that $\bar{\alpha}$ is the identity on $A^{**}q$. Consider the (canonical) \sh{} $\pi_q \colon \cA \to \cA^{**}q$. Then $\pi_q=\bar{\alpha} \circ \pi_q = \pi_ q \circ \alpha$. Take a pure state $\sigma$ on $\pi_q(\cA)$ and set $\rho = \sigma \circ \pi_q$, which is a pure state on $\cA$. Then $\rho \circ \alpha = \sigma \circ \pi_q \circ \alpha = \rho$. Hence $\hat{\alpha}$ is not free.
\end{proof}

\begin{example}  \label{ex:A} Let $A = \bigotimes_{n=1}^\infty M_{k_n}(\C)$ be a UHF-algebra with unique tracial state $\tau$, where $k_n \ge 2$, for all $n$. Consider the unitary elements $u_n = \diag(1,1, \cdots, 1, -1) \in M_{k_n}(\C)$, and the automorphism
$$\alpha = \bigotimes_{n=1}^\infty \Ad_{u_n}$$
on $A$. Then $\alpha$ is outer. But if $\sum_{n=1}^\infty \|1_{k_n} - u_n\|_2 < \infty$, then 
$\alpha$ extends to an inner automorphism $\Ad_u$ on $\pi_\tau(A)''$, with $u = \bigotimes_{n=1}^\infty u_n \in \pi_\tau(A)''$.  Hence $\alpha$ is not \SAvP, and if $\tau(u) \ne 0$, which is the case if $\sum_{n=1}^\infty 1/k_n < \infty$, then $\alpha$ does not have \AvP.

In this example, $\alpha$ is an outer automorphism on $A$ (even residually properly outer), but $\bar{\alpha}_\fin$ is inner on $A^{**}_\fin = \pi_\tau(A)''$. In particular, $\alpha$ does not have \SAvP.
\end{example}

\begin{example} \label{ex:B}
Let $2 \le n < \infty$ and let $\mathcal{T}_n$ be the Cuntz-Toeplitz algebra generated by isometries $s_j$, $1 \le j \le n$, with orthogonal range projections, described by the extension
$$ 0 \to \cK \to \mathcal{T}_n \to \cO_n \to 0,$$
where $\cK$ denotes the compact operators on a separable infinite dimensional Hilbert space.
Let us first note that no automorphism on $ \mathcal{T}_n$ is properly outer, since its restriction to $\cK$ always is multiplier inner. Secondly, there do exist residually outer automorphisms on  $\mathcal{T}_n$. Take, for example, any automorphism $\alpha$ on $\mathcal{T}_n$  that permutes the generators $s_j$ according to some permutation $\sigma \ne \mathrm{id}$ on $\{1,2, \dots, n\}$. Then $\alpha$ is outer on  $\mathcal{T}_n$ and on $\cO_n$, hence residually outer. 

We finally observe that an automorphism $\alpha$ on $ \mathcal{T}_n$ has \SAvP{} if and only if it is residually outer. This fact, that we will prove below, does not follow from Theorem~\ref{thm:A} above, and moreover shows that the sufficient condition in Theorem~\ref{thm:A} (iii) is not necessary. 

Take a residually outer automorphism $\alpha$ on $ \mathcal{T}_n$. Then $\alpha$ descends to an outer automorphism $\overset{\centerdot}{\alpha}$ on $\cO_n$, which has \SAvP{} by Corollary~\ref{cor:A} above. Let $b \in \mathcal{T}_n$ be given, and let $\pi$ denote the quotient mapping $\mathcal{T}_n \to \cO_n$. Let $\ep >0$. Then there exist unitaries $v_1, \dots, v_m \in \cO_n$ such that
$$\Big\| \frac{1}{m} \sum_{j=1}^m v_j \pi(b) \overset{\centerdot}{\alpha}(v_j)^*\Big\| < \ep.$$
The unitaries $v_j$ lift to unitaries $u_j \in \mathcal{T}_n$ (because the unitary group of $\cO_n$ is connected). It follows that there exists $x \in \cK$ such that
$$\Big\| \frac{1}{m} \sum_{j=1}^m u_j b \alpha(u_j)^*-x\Big\| < \ep.$$
The restriction of $\alpha$ to $\cK$ is multiplier inner, say equal to $\Ad_w$, for some $w \in \cM(\cK) = B(H)$. It follows from \cite[Theorem 4.7]{NgRobSkou:TAMS}, see also \cite[Theorem 2.1 and Proposition 2.2]{HjeRor:stable}, that $0 \in C_{\widetilde{\cK}}(y)$, for each $y \in \cK$, and in particular for $y=xw$. We can therefore find unitaries $z_1, \dots, z_n \in \widetilde{\mathcal{K}} \subseteq \mathcal{T}_n$ with
$$\Big\| \frac{1}{n} \sum_{i=1}^n z_i x \alpha(z_i)^*\Big\| =  \Big\| \frac{1}{n} \sum_{i=1}^n z_i xw z_i^*\Big\| < \ep.$$
In summary, we have
$$\Big\| \frac{1}{nm} \sum_{i,j} z_iu_j b \alpha(z_iu_j)^*\Big\| < 2\ep.$$
\end{example}

\noindent It was shown in Example~\ref{ex:B} that the sufficient condition Theorem~\ref{thm:A}~(iii), to ensure the strong averaging property of an automorphism, is not necessary. We do not know if the necessary condition in  Theorem~\ref{thm:A}~(i) is sufficient: 

\begin{question} \label{q:A1} Is it the case that an automorphism $\alpha$ on a unital \Cs{} $A$ has \SAvP{} if and only if it is residually outer and $\bar{\alpha}_\fin$ is properly outer?
\end{question}

\noindent The ``only if'' part is established in Theorem~\ref{thm:A}, and the example above shows that the condition ``residually properly outer'' is too strong. The situation in Example~\ref{ex:B} is special in that every unitary in any quotient of $\mathcal{T}_n$ lifts to a unitary in $\mathcal{T}_n$. This could indicate that the answer to the question above could be negative and that a complete characterization of automorphisms with \SAvP{} is strictly between the necessary condition of Theorem~\ref{thm:A}~(ii) and the sufficient condition of Theorem~\ref{thm:A}~(iii).

An affirmative answer to Question~\ref{q:sr1} implies an affirmative answer to Question~\ref{q:A1} by Theorem~\ref{thm:A} (ii).

\section{Applications} \label{sec:applications}

\noindent We give in this section three applications of the strong averaging property for an automorphism and for actions with this property. The applications provide simpler proof of existing results, which is a main purpose of advocating this property of automorphisms, but we are also able to improve some of these existing results in different directions. 

Given an  action $\alpha$ of a discrete group $\Gamma$ on a unital \Cs{} $A$. We say that $\alpha$ has \SAvP{} if $\alpha_t$ has \SAvP, for all $t \in \Gamma$, $t \ne e$, where $e \in \Gamma$ denotes the neutral element. The associated reduced crossed product is denoted $A \rtimes_r \Gamma$. Denote by $t \mapsto u_t$, $t \in \Gamma$, the unitary  representation of $\Gamma$ inside the crossed product, so that $A \rtimes_r \Gamma$ is the \Cs{} generated by $A \cup \{u_t : t \in \Gamma\}$. 

Let $E \colon A \rtimes_r \Gamma \to A$ denote the canonical conditional expectation, and for $t \in \Gamma$, let $E_t \colon  A \rtimes_r \Gamma \to A$ be given by $E_t(x) = E(xu_t^*)$, which is the ``Fourier coefficient'' of $x$ at $t$. Define the support, $\mathrm{supp}(x)$, of an element $x \in A \rtimes_r \Gamma$ to be the set $\{t \in \Gamma: E_t(x) \ne 0\}$. 

\begin{lemma} \label{lm:1}
Let $\alpha \colon \Gamma \curvearrowright A$ be a \SAvP{} action of a discrete group $\Gamma$ on a unital \Cs{} $A$. Let $x  \in A \rtimes_r \Gamma$ be given. 
\begin{enumerate}
\item If $E(x)$ is central in $A$, then $E(x) \in C_A(x)$.
\item If  $E_t(x)= 1_A$,  for some $t \in \Gamma$, then $u_t \in C_A^{\, \alpha_{t^{-1}}}(x)$.
\item $\mathrm{dist}(A, C_A(x)) = 0$.
\end{enumerate}
\end{lemma}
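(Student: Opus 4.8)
The plan rests on one computation: describing how the (twisted) averaging operations act on the Fourier coefficients $E_s(\cdot)$ of an element of $A \rtimes_r \Gamma$. Writing $x = \sum_s E_s(x) u_s$ formally and using the covariance relation $u_s v^* u_s^* = \alpha_s(v^*)$ for $v \in \cU(A)$, one computes that the $s$-th Fourier coefficient of $v x v^*$ equals $v\, E_s(x)\, \alpha_s(v)^*$, and, more generally, that the $s$-th Fourier coefficient of $v x \alpha_{t^{-1}}(v)^*$ equals $v\, E_s(x)\, \alpha_{st^{-1}}(v)^*$. Thus ordinary conjugation (the averaging defining $C_A$) acts on each coefficient $E_s(x)$ exactly as the $\alpha_s$-averaging, while the twisted conjugation defining $C_A^{\alpha_{t^{-1}}}$ acts on $E_s(x)$ as the $\alpha_{st^{-1}}$-averaging. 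Crucially, the ``diagonal'' coefficient (at $s = e$ in the first case, at $s = t$ in the second) is governed by $\alpha_e = \mathrm{id}$ and is therefore only conjugated, not twisted; since every $\alpha_r$ with $r \neq e$ has \SAvP, Lemma~\ref{lm:A} supplies finitely many unitaries $v_1,\dots,v_m \in A$ that \emph{simultaneously} push all the remaining coefficients below any prescribed tolerance.

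For (iii) I would fix $\ep > 0$, approximate $x$ in norm by a finitely supported $x' = \sum_{s \in F} a_s u_s$ with $\|x - x'\| < \ep$ and $e \in F$, and apply Lemma~\ref{lm:A} to the automorphisms $\{\alpha_s : s \in F \setminus \{e\}\}$ and the elements $\{a_s\}$ with tolerance $\ep/|F|$, producing unitaries $v_1,\dots,v_m$. The averaging map $\Phi(z) = \frac1m \sum_i v_i z v_i^*$ is a contraction with $\Phi(x) \in C_A(x)$, and by the coefficient formula the non-$e$ coefficients of $\Phi(x')$ have total norm below $\ep$; hence $\Phi(x)$ lies within $2\ep$ of its $e$-coefficient $\frac1m \sum_i v_i a_e v_i^* \in A$. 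Letting $\ep \to 0$ gives $\dist(A, C_A(x)) = 0$. Part (i) uses the same $\Phi$: its $e$-coefficient is close to $\frac1m \sum_i v_i a_e v_i^*$, and because $E(x)$ is central we have $\frac1m \sum_i v_i E(x) v_i^* = E(x)$, so a short estimate yields $\|\Phi(x) - E(x)\| < 3\ep$; as $C_A(x)$ is norm-closed, $E(x) \in C_A(x)$.

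Part (ii) runs identically with the twisted averaging $\Psi(z) = \frac1m \sum_i v_i z \alpha_{t^{-1}}(v_i)^*$, which maps $x$ into $C_A^{\alpha_{t^{-1}}}(x)$. The point is that the $t$-th coefficient of $v x \alpha_{t^{-1}}(v)^*$ is $v\, E_t(x)\, \alpha_{tt^{-1}}(v)^* = v\, E_t(x)\, v^*$, which equals $1_A$ since $E_t(x) = 1_A$ and $\alpha_e = \mathrm{id}$, whereas every other coefficient is governed by some $\alpha_{st^{-1}}$ with $st^{-1} \neq e$ and can therefore be averaged away. After the finite-support approximation one obtains $\|\Psi(x) - u_t\| = O(\ep)$ with $\Psi(x) \in C_A^{\alpha_{t^{-1}}}(x)$, whence $u_t \in C_A^{\alpha_{t^{-1}}}(x)$.

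The main obstacle, and really the only delicate point, is gaining uniform control over all the Fourier coefficients at once: a general element of the reduced crossed product has no norm-convergent Fourier expansion, so one must first pass to a finitely supported approximant and then guard against the fact that the number of surviving coefficients grows as $\ep \to 0$. This is precisely why I would feed the tolerance $\ep/|F|$ into Lemma~\ref{lm:A}, so that the crude bound $\sum_s \|c_s\|$ over the finite support $F$ stays below $\ep$. Beyond this, the argument is a routine assembly of the covariance identity, the contractivity of the averaging maps $\Phi$ and $\Psi$, and the norm-closedness of $C_A(x)$ and $C_A^{\alpha_{t^{-1}}}(x)$.
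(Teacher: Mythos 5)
Your proposal is correct and follows essentially the same route as the paper: approximate $x$ by a finitely supported element, use the covariance relation to see that (twisted) conjugation acts on the Fourier coefficient at $s$ via $\alpha_{st^{-1}}$ (so the diagonal coefficient is merely conjugated), and invoke Lemma~\ref{lm:A} simultaneously on the finitely many off-diagonal coefficients with tolerance $\ep/|F|$. The only cosmetic differences are that the paper fixes $E_t(x_0)=E_t(x)$ in the approximant (saving one $\ep$ in the estimates) and treats (i) and (ii) in a single unified computation, whereas you group (i) with (iii); both variants are sound.
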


\noindent Note that $C_A^{\, \alpha_{s}}(x)$ is contained in $C^*(A,x)$, the smallest sub-\Cs{} of $A \rtimes_r \Gamma$ that contains $A$ and $x$, for all $x \in A \rtimes_r \Gamma$ and all $s \in \Gamma$.

\begin{proof} (i) and (ii). Let $t \in \Gamma$ and assume that $E_t(x)$ is central in $A$. (In part (i) we have $t=e$, $u_t = 1_A$, and $E_e(x) = E(x)$, while $E_t(x) = 1_A$ in part (ii).) Let $\ep >0$ and choose $x_0   \in A \rtimes_r \Gamma$ of finite support $F \subseteq \Gamma$ such that $\|x-x_0\| \le \ep$ and $E_t(x_0) = E_t(x)$. By Lemma~\ref{lm:A} we can find unitaries $v_1, \dots, v_m \in A$ such that
$$\big\| \frac{1}{m} \sum_{j=1}^m v_j E_s(x_0) u_s \alpha_{t^{-1}}(v_j)^*\big\|  = \big\| \sum_{j=1}^m v_j E_s(x_0) \alpha_{st^{-1}}(v_j)^*\big\| < \ep/|F|,$$
for $s \in F \setminus \{t\}$. For each $j$, $v_j E_t(x_0)u_t \alpha_{t^{-1}}(v_j)^* = v_j E_t(x_0) v_j^* u_t = E_t(x)u_t$. It follows that
$$\Big\| \frac{1}{m} \sum_{j=1}^m v_j x \alpha_{t^{-1}}(v_j)^*  -  E_t(x) u_t\Big\|  \le \Big\| \frac{1}{m} \sum_{j=1}^m v_j x_0 \alpha_{t^{-1}}(v_j)^*  -E_t(x)  u_t\Big\| + \|x-x_0\| < 2\ep.$$
This shows that (i) and (ii) hold.

(iii). Proceeding as above with $t=e$ and with $E(x_0) = E(x)$ (not necessarily assuming this to be central), we arrive at
$$\Big\| \frac{1}{m} \sum_{j=1}^m v_j x v_j^*  - \frac{1}{m} \sum_{j=1}^m v_j E(x) v_j^* \Big\| < 2\ep,$$
from which (iii) follows. \end{proof}

\noindent It is well-known that for an action $\alpha \colon \Gamma \curvearrowright A$ on a unital \Cs{} $A$, any tracial state $\tau$ on $A$ extends to a tracial state on $A \rtimes_r \Gamma$ if and only if it is invariant under the action $\alpha$, and that an extension in this case is given by $\bar{\tau} := \tau \circ E$. 

The question of when this extension is unique is subtle. Sufficient conditions for uniqueness of extending a tracial state on (non-commutative) $A$ to a tracial state on $A \rtimes_r \Gamma$ were given in \cite{Thomsen:trace} (in the case where $\Gamma$ is abelian) and a necessary and sufficient condition in the general case was recently obtained by Ursu in \cite[Theorem 1.8]{Ursu:trace}. We show below that
 the \SAvP{} condition  is sufficient to ensure unique trace extension, and it further implies a stronger uniqueness result. 
 
 The sufficient condition given in \cite{Thomsen:trace} involves outerness of the action on the von Neumann algebra completion of the \Cs{} with respect to the considered trace, a condition which is ensured if the action on $A^{**}_\fin$ is properly outer, which for simple unital \Cs s (with a trace) is equivalent to \SAvP, cf.\ Corollary~\ref{cor:A}. The \SAvP{} condition is stronger than the condition considered in \cite{Ursu:trace}. In fact, Ursu remarks in  \cite[Example 5.1]{Ursu:trace} that one can construct a (necessarily inner) action of $\Z/2 \times \Z/2$ on $M_2$ so that $M_2 \rtimes (\Z/2 \times \Z/2)$ is $M_4$. 

\begin{corollary} \label{cor:traces}
Let $\alpha \colon \Gamma \curvearrowright A$ be a \SAvP{} action of a discrete group $\Gamma$ on a unital \Cs{} $A$, and let $\tau$ be  an $\alpha$-invariant tracial state on $A$. Then $\bar{\tau} = \tau \circ E$ is the unique state on $A \rtimes_r \Gamma$ which extends $\tau$ and satisfies $\bar{\tau}(uxu^*)=\bar{\tau}(x)$, for all $x \in A \rtimes_r \Gamma$ and all unitaries $u \in A$. In particular, $\bar{\tau}$ is the unique  extension of $\tau$ to a tracial state on $A \rtimes_r \Gamma$.
\end{corollary}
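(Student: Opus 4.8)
The plan is to prove the stronger uniqueness statement first---that any state $\phi$ on $A \rtimes_r \Gamma$ which extends $\tau$ and is invariant under conjugation by unitaries from $A$ must equal $\bar{\tau}$---and then to deduce the tracial uniqueness as an immediate consequence. Existence causes no trouble: $\bar{\tau} = \tau \circ E$ extends $\tau$ because $E$ restricts to the identity on $A$, it is a tracial state by the remark preceding the corollary, and, being a trace, it is automatically invariant under conjugation by unitaries from $A$.

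The engine of the argument is Lemma~\ref{lm:1}(iii), or rather the explicit estimate produced in its proof. Fix $x \in A \rtimes_r \Gamma$ and $\ep > 0$. That estimate supplies unitaries $v_1, \dots, v_m \in \cU(A)$ with
\[
\Big\| \frac{1}{m} \sum_{j=1}^m v_j x v_j^*  - \frac{1}{m} \sum_{j=1}^m v_j E(x) v_j^* \Big\| < 2\ep .
\]
I will write $y = \frac{1}{m}\sum_{j} v_j x v_j^*$, which lies in $C_A(x)$, and $z = \frac{1}{m}\sum_{j} v_j E(x) v_j^*$, which lies in $A$ because $E(x) \in A$. The key observation is that $\phi$ is constant on $C_A(x)$: invariance under $A$-conjugation gives $\phi(v x v^*) = \phi(x)$ for every $v \in \cU(A)$, and this passes to the convex hull and, by continuity, to its norm closure. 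Hence $\phi(y) = \phi(x)$.

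It then remains to evaluate $\phi$ on $z$ and compare. Since $z \in A$ and $\phi$ extends $\tau$, we have $\phi(z) = \tau(z)$; and because $\tau$ is a trace, $\tau(v_j E(x) v_j^*) = \tau(E(x))$ for each $j$, so $\tau(z) = \tau(E(x)) = \bar{\tau}(x)$. Combining these gives $|\phi(x) - \bar{\tau}(x)| = |\phi(y) - \phi(z)| \le \|y - z\| < 2\ep$, and letting $\ep \to 0$ yields $\phi(x) = \bar{\tau}(x)$. The final assertion is then immediate: any tracial state $\psi$ on $A \rtimes_r \Gamma$ extending $\tau$ satisfies $\psi(u x u^*) = \psi(x)$ for all unitaries $u \in A$ by the trace property, so it falls under the uniqueness just established and equals $\bar{\tau}$.

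I do not expect a serious obstacle here, since Lemma~\ref{lm:1}(iii) carries the analytic weight. The one point requiring care is bookkeeping: recognizing that the averaged element $z$ built from $E(x)$ lands back in $A$ (so that $\phi(z) = \tau(z)$ may be invoked), while the averaged element $y$ built from $x$ stays inside $C_A(x)$ (so that the conjugation-invariance of $\phi$ forces $\phi(y) = \phi(x)$), and that the trace property collapses $\tau(z)$ to $\tau(E(x))$.
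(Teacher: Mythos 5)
Your proof is correct and follows essentially the same route as the paper: both rest on the observation that an $A$-conjugation-invariant state is constant on $C_A(x)$, combined with Lemma~\ref{lm:1}(iii). The paper's version is just a compressed form of yours---it invokes the statement of Lemma~\ref{lm:1}(iii) rather than the explicit averaging estimate from its proof, and leaves the $2\ep$-bookkeeping (which you carry out carefully, using that $\tau(z)=\tau(E(x))=\bar{\tau}(x)$ exactly) to the reader.
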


\begin{proof} Any state $\rho$ on $A\rtimes_r \Gamma$ satisfying  $\rho(uxu^*)=\rho(x)$, for all $x \in A \rtimes_r \Gamma$ and $u \in \cU(A)$,  is constant on $C_A(x)$. Hence, by Lemma~\ref{lm:1} (iii), $\rho$ is determined by its restriction to $A$. 
\end{proof}

\noindent We have the following partial converse to the corollary: If $A$ is a unital separable monotracial \Cs, and $\alpha \colon \Gamma \curvearrowright A$ is an action such that 
the tracial state $\tau$ on $A$ has a unique extension to a state $\rho$ on $A \rtimes_r \Gamma$  satisfying $\rho(uxu^*)=\rho(x)$, for all $x \in A \rtimes_r \Gamma$ and all unitaries $u \in A$, then the extension of $\alpha$  to  $\pi_\tau(A)'' = A^{**}_\fin$ is (properly) outer. Hence the conclusion of Corollary~\ref{cor:traces} holds if and only if the action satisfies \SAvP, when $A$ is simple, separable and monotracial (possibly always, when $A$ is simple).

Indeed, we have an inclusion of finite von Neumann algebras $\pi_{\bar{\tau}}(A)'' \subseteq \pi_{\bar{\tau}}(A \rtimes_r \Gamma)''$, and $\pi_{\bar{\tau}}(A)'' \cong \pi_{\tau}(A)'' \cong A^{**}_\fin$, where $\bar{\tau}$ as above is the canonical extension of $\tau$ to the crossed product. If  the extension $\bar{\alpha}$ of $\alpha$ to $\pi_{\bar{\tau}}(A)''$ is not outer, then $\bar{\alpha}_t$ is inner for some $t \ne e$ in $\Gamma$, say $\bar{\alpha}_t = \mathrm{Ad}_w$. It follows that $wu_t^*$ belongs to the relative commutant $\pi_{\bar{\tau}}(A \rtimes_r \Gamma)'' \cap \pi_{\bar{\tau}}(A)'$, which therefore is non-trivial. Choose any non-trivial projection $q$ in this relative commutant, and consider the states $\rho_0(x) = \bar{\tau}(xq)/\bar{\tau}(q)$ and $\rho_1(x) = \bar{\tau}(x(1-q))/\bar{\tau}(1-q)$ on $\pi_{\bar{\tau}}(A \rtimes_r \Gamma)''$. Then $\rho_j \circ \pi_{\bar{\tau}}$ are two distinct states on $A \rtimes_r \Gamma$ that restrict to tracial states on $A$ (hence extend $\tau$), and they are both invariant under conjugation by unitaries from $A$.

\medskip \noindent
Recall that a unital \Cs{} $A$ is said to have the \emph{Dixmier property} if $C_A(x) \cap \C 1_A \ne \emptyset$\footnote{See Footnote 1.}, for all $x \in A$, and that an inclusion $B \subseteq A$ of unital \Cs s has the \emph{relative Dixmier property} if $C_B(x) \cap \C 1_A \ne \emptyset$, for all $x \in A$.  Popa introduced in \cite{Popa:ENS1999} and \cite{Popa:JFA2000} the notion of the relative Dixmier property, both in the context of von Neumann algebras and of $C^*$-algebras, and he proved several results characterizing which inclusions have the relative Dixmier property, including the result below (with different assumptions, but the proof is basically the same). 

Popa proved Lemma~\ref{lm:A} above assuming $A$ is simple with the Dixmier property and under suitable outerness conditions on the automorphisms $\alpha_j$ by showing that the inclusion of $A$ into $M_{n+1}(A)$, given by $a \mapsto \mathrm{diag}(a,\alpha_1(a), \dots, \alpha_n(a))$, has the relative Dixmier property. Popa used this to obtain the corollary below under the same outerness conditions on the action of $\Gamma$ on $A$ and assuming $A$ is simple. Note that we can skip the simplicity assumption. We refer to \cite{ArchRobTiku:Dixmier} for examples of non-simple \Cs s with the Dixmier property, and note here that the Cuntz-Toeplitz algebra mentioned in Example~\ref{ex:B} above is such an example of a non-simple \Cs{} with the Dixmier property. I thank Tron Omland for suggesting that the second part of the ``only if'' part of the corollary below holds.

\begin{corollary} \label{cor:RelDix} Let $\alpha \colon \Gamma \curvearrowright A$ be an action of a discrete group $\Gamma$ on a unital \Cs{} $A$. Then $A \subseteq A \rtimes_r \Gamma$ has the relative Dixmier property if and only if $A$ has the Dixmier property and the action $\alpha$ has \SAvP.
\end{corollary}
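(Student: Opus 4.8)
The plan is to prove the two implications separately, using the canonical conditional expectation $E \colon A \rtimes_r \Gamma \to A$ as the main bridge, together with Lemma~\ref{lm:1}. For the ``only if'' direction, assume $A \subseteq A \rtimes_r \Gamma$ has the relative Dixmier property. That $A$ itself has the Dixmier property is immediate: for $x \in A$ every conjugate $vxv^*$ with $v \in \cU(A)$ again lies in $A$, so $C_A(x)$ is the same set whether computed in $A$ or in $A \rtimes_r \Gamma$, and the relative Dixmier property forces $C_A(x) \cap \C 1_A \ne \emptyset$. To get \SAvP, fix $t \ne e$ and $b \in A$; I would reformulate $C_A^{\,\alpha_t}(b)$ exactly as in the proof of Proposition~\ref{prop:0}, using the identity $v b \alpha_t(v)^* = (v\, b u_t\, v^*) u_t^*$, to obtain $C_A^{\,\alpha_t}(b) = C_A(bu_t)\,u_t^*$. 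Thus $0 \in C_A^{\,\alpha_t}(b)$ if and only if $0 \in C_A(bu_t)$. The relative Dixmier property gives $\lambda 1_A \in C_A(bu_t)$ for some $\lambda \in \C$, and applying the ($A$-bimodule, norm-contractive) expectation $E$ --- which sends every conjugate $v(bu_t)v^*$ to $v E(bu_t) v^* = 0$ because $t \ne e$ --- shows that $E$ vanishes on all of $C_A(bu_t)$, whence $\lambda 1_A = E(\lambda 1_A) = 0$. So $0 \in C_A^{\,\alpha_t}(b)$ for every $b$, i.e.\ $\alpha_t$ has \SAvP.

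For the ``if'' direction, assume $A$ has the Dixmier property and $\alpha$ has \SAvP, and fix $x \in A \rtimes_r \Gamma$. The Dixmier property of $A$ applied to $E(x) \in A$ yields $\lambda \in \C$ with $\lambda 1_A \in C_A(E(x))$, so it suffices to prove the intermediate claim that $\lambda 1_A \in C_A(E(x))$ implies $\lambda 1_A \in C_A(x)$. Since $C_A(x - \lambda 1_A) = C_A(x) - \lambda 1_A$, and likewise for $E(x)$, I may subtract $\lambda 1_A$ and reduce to showing that $0 \in C_A(E(x))$ implies $0 \in C_A(x)$. This is where the two levels of averaging combine. Given $\ep > 0$, I would first choose a convex combination $z = \sum_l \mu_l w_l x w_l^*$ of $A$-conjugates of $x$ with $\|E(z)\| = \|\sum_l \mu_l w_l E(x) w_l^*\| < \ep$; here $z \in C_A(x)$ and its ``diagonal'' part is already small. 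Then I would apply the averaging produced in the proof of Lemma~\ref{lm:1}(iii) to $z$: there are unitaries $v_1, \dots, v_m \in A$ with $\big\|\tfrac1m \sum_j v_j z v_j^* - \tfrac1m \sum_j v_j E(z) v_j^*\big\| < \ep$, where the \SAvP{} hypothesis is used to kill all Fourier coefficients $E_s(z)$ with $s \ne e$. Since $\tfrac1m \sum_j v_j z v_j^* \in C_A(z) \subseteq C_A(x)$ (by the nesting remark following Definition~\ref{def:AvP}) and $\|\tfrac1m \sum_j v_j E(z) v_j^*\| \le \|E(z)\| < \ep$, we obtain an element of $C_A(x)$ of norm $< 2\ep$; letting $\ep \to 0$ and using that $C_A(x)$ is norm closed gives $0 \in C_A(x)$.

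I expect the main obstacle to lie in this last step, namely in coordinating the two averaging procedures so that neither undoes the other. The first averaging makes $E(z)$ small but may leave large off-diagonal mass; the crossed-product averaging of Lemma~\ref{lm:1} removes the off-diagonal mass but, crucially, conjugation by $A$-unitaries keeps the conditional expectation in the range of $E$ and cannot increase $\|E(z)\|$, so the gain from the first step is preserved. Verifying this compatibility --- and that the convex hulls are genuinely nested, so that the final element lies \emph{in} $C_A(x)$ rather than merely near it --- is the delicate point; once it is in place, closedness of $C_A(x)$ upgrades the approximate conclusion to the exact statement $\lambda 1_A \in C_A(x)$, which is precisely the relative Dixmier property.
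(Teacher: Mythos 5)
Your proposal is correct and follows essentially the same route as the paper: in the ``only if'' direction both arguments reduce via $C_A^{\,\alpha_t}(b) = C_A(bu_t)u_t^*$ to showing $C_A(bu_t) \cap \C 1_A \subseteq \{0\}$ (the paper observes $C_A(bu_t) \subseteq Au_t$, you apply $E$ --- the same fact), and in the ``if'' direction both perform the identical two-stage averaging, first using the Dixmier property of $A$ on $E(x)$ and then \SAvP{} through Lemma~\ref{lm:1} to kill the off-diagonal part, finishing with nesting of the convex hulls and norm-closedness. The only cosmetic difference is that you normalize to $\lambda = 0$ and quote the estimate inside the proof of Lemma~\ref{lm:1}(iii), whereas the paper perturbs $y$ to $z := y + (\lambda 1_A - E(y))$ so that $E(z) = \lambda 1_A$ is central and invokes the statement of Lemma~\ref{lm:1}(i); both rest on the same underlying averaging computation.
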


\begin{proof} It is clear that $A$ has the Dixmier property if $A \subseteq A \rtimes_r \Gamma$ has the relative Dixmier property. To see that this also implies that $\alpha$ has \SAvP, take $t \ne e$ in $\Gamma$ and $b \in A$. Since $C_A^{\, \alpha_t}(b) = C_A(bu_t)u_t^*$, cf.\ (the proof of) Proposition~\ref{prop:0},  it suffices to show that $0 \in C_A(bu_t)$.  We know that $C_A(bu_t) \cap \C 1_A \ne \emptyset$, because $A \subseteq A \rtimes_r \Gamma$ has the relative Dixmier property. We easily see that $C_A(bu_t) \subseteq A u_t$, which implies $C_A(bu_t) \cap \C 1_A \subseteq A u_t \cap \C 1_A = \{0\}$.

Suppose now  that the action has \SAvP. Let $x  \in A \rtimes_r \Gamma$. We show that if $\lambda \in \C$ and $\lambda \cdot 1_A \in C_A(E(x))$, then $\lambda \cdot 1_A  \in C_A(x)$. This will show that $A \subseteq A \rtimes_r \Gamma$ has the relative Dixmier property if $A$ has the Dixmier property.

Let $\ep >0$ and find unitaries $v_1, \dots, v_m \in A$ such that
$\big\|\frac{1}{m} \sum_{j=1}^m v_j E(x)v_j^* - \lambda \cdot 1_A\big\| < \ep$. Put $y = \frac{1}{m} \sum_{j=1}^m v_jx v_j^*$. Then $y \in C_A(x)$ and $\|E(y) - \lambda \cdot 1_A\| < \ep$. Put $z = y + (\lambda \cdot 1_A - E(y))$, so that $E(z) = \lambda \cdot 1_A$. Then $\lambda \cdot 1_A \in C_A(z)$, by Lemma~\ref{lm:1} (i), which implies that 
$$\mathrm{dist}(\lambda \cdot 1_A, C_A(x)) \le \mathrm{dist}(\lambda \cdot 1_A, C_A(y)) \le \|y-z\| < \ep,$$
thus proving the claim.
\end{proof}

\noindent
The theorem below is our main application of the strong averaging property. Recall from \cite{Ror:C*-irreducible} that an inclusion $A \subseteq B$ of unital \Cs s is \emph{$C^*$-irreducible} if all intermediate \Cs s are simple. It is a classic result that goes back to Elliott, \cite{Elliott:PRIM-1980}, Kishimoto, \cite{Kis:Auto} and Olesen-Pedersen, \cite{OlePed:C*-dynamicIII} that $C^*$-irreducibility of an inclusion $A \subseteq  A \rtimes_r \Gamma$ holds if and only if $A$ is simple and the action $\Gamma \curvearrowright A$ is outer, cf., \cite[Theorem 5.8]{Ror:C*-irreducible}.

Part (ii) about the Galois correspondance between intermediate \Cs s and intermediate groups was first obtained by Izumi, \cite{Izumi:Crelle2002}, in the case where $\Gamma$ is finite, the action $\Gamma \curvearrowright A$ is outer, $\Lambda$ is the trivial group, and $A$ is simple, and it was extended to arbitrary discrete groups $\Gamma$ by Cameron-Smith, \cite{CamSmith:Galois}, again assuming  $\Gamma \curvearrowright A$ is outer, $\Lambda$ is the trivial group, and $A$ is simple. This was further extended by Bedos and Omland, \cite[Theorem 5.3]{BedosOmland:irreducible}, to the general case of (ii) below, assuming that the (twisted) action of $\Gamma/\Lambda$ on $A$ is outer, in which case they can deduce their result from the  Cameron-Smith theorem. Our proof below is self-contained, and uses normality of $\Lambda$ in $\Gamma$ in a different way. Part (i) also follows from  \cite[Theorem 5.3]{BedosOmland:irreducible} in the case of $\Lambda$ being normal in $\Gamma$, but we exend their result to cover the general, not necessarily normal, case. 

A proof of (ii) below, in the case where $\Lambda$ is the trivial group and $A$ is simple, using Popa's averaging technique, was also given in \cite{Ror:C*-irreducible}. The statement below is more general than the one in \cite{Ror:C*-irreducible}, and  the proof has been futher streamlined.

\begin{theorem}  \label{thm:B}
Let $\Gamma \curvearrowright A$ be an \SAvP{} action of a discrete group $\Gamma$ on a unital \Cs{} $A$, and let $\Lambda$ be a subgroup of $\Gamma$ such that the action of $\Lambda \curvearrowright A$ is minimal. Then:
\begin{enumerate}
\item The inclusion $\cA \rtimes_r \Lambda \subseteq \cA \rtimes_r \Gamma$ is $C^*$-irreducible.
\item If, moreover $\Lambda$ is normal in $\Gamma$, then there is a bijective correspondance between intermediate \Cs s $\cA \rtimes_r \Lambda \subseteq \cD \subseteq \cA \rtimes_r \Gamma$ and intermediate subgroups $\Lambda \subseteq \Upsilon \subseteq \Gamma$ via $\cD = \cA \rtimes_r \Upsilon$. 
\end{enumerate}
 \end{theorem}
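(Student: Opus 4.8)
The plan is to prove the two parts using the averaging machinery of Lemma~\ref{lm:1}, exploiting the minimality of $\Lambda \curvearrowright A$ to control the Fourier coefficients of elements in intermediate algebras.

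For part (i), I would start from the characterization of $C^*$-irreducibility in \cite{Ror:C*-irreducible}: it suffices to show that for any intermediate \Cs{} $\cA \rtimes_r \Lambda \subseteq \cD \subseteq \cA \rtimes_r \Gamma$, every non-zero (closed two-sided) ideal $J$ of $\cD$ equals $\cD$. The key reduction is to show $1_A \in J$ whenever $J \neq 0$. Given a non-zero positive $x \in J$, I would first apply the conditional expectation $E \colon \cA \rtimes_r \Gamma \to \cA$ (whose restriction to $\cD$ lands in $\cA$, since $\cA \subseteq \cD$) to get $E(x) \in \cA \cap J$, with $E(x)$ positive and non-zero. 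Now minimality of $\Lambda \curvearrowright A$ means $\cA$ has no non-trivial $\Lambda$-invariant ideals, and since $\cA \rtimes_r \Lambda \subseteq \cD$ the ideal generated by $E(x)$ in $\cD$ contains $u_s E(x) u_s^* = \alpha_s(E(x))$ for all $s \in \Lambda$; the closed ideal generated by $\{\alpha_s(E(x)) : s \in \Lambda\}$ in $\cA$ is $\Lambda$-invariant and non-zero, hence equals $\cA$, so $1_A$ lies in $J$. The one subtle point is extracting $E(x) \neq 0$ from $x \neq 0$: this uses faithfulness of the (reduced) conditional expectation, applied to $x^*x$ rather than $x$ directly. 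Once $1_A \in J$ we get $J = \cD$, proving (i). I expect part (i) to be the easier half.

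For part (ii), assume $\Lambda \lhd \Gamma$. I would construct the maps in both directions of the claimed bijection. From a group $\Upsilon$ with $\Lambda \subseteq \Upsilon \subseteq \Gamma$, one gets $\cD = \cA \rtimes_r \Upsilon$, which sits between $\cA \rtimes_r \Lambda$ and $\cA \rtimes_r \Gamma$; that these are genuinely distinct for distinct $\Upsilon$ follows because the Fourier coefficients detect the support in $\Gamma$. The harder direction is: given an intermediate \Cs{} $\cD$, recover a group. I would define $\Upsilon = \{t \in \Gamma : u_t \in \cD\}$ (or, more robustly, $\Upsilon = \{t \in \Gamma : \mathrm{supp}(x) \ni t \text{ for some } x \in \cD\}$) and aim to show $\cD = \cA \rtimes_r \Upsilon$. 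That $\Upsilon$ is a group uses that $\cD$ is an algebra and that $u_s u_t = u_{st}$. The crux is the inclusion $\cD \supseteq \cA \rtimes_r \Upsilon$ together with the reverse containment $\mathrm{supp}(\cD) \subseteq \Upsilon$; the former reduces to showing $u_t \in \cD$ for each $t \in \Upsilon$, i.e.\ upgrading ``some element of $\cD$ is supported at $t$'' to ``$u_t$ itself lies in $\cD$.''

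The main obstacle, and the place where \SAvP{} and normality of $\Lambda$ enter decisively, is exactly this upgrading step. Here is the plan: given $x \in \cD$ with $E_t(x) \neq 0$ for some $t \in \Gamma \setminus \Lambda$, I would use minimality of $\Lambda \curvearrowright A$ to average $x$ by conjugation with unitaries $u_s$, $s \in \Lambda$, inside $\cD$, so as to arrange (up to approximation) that the Fourier coefficient $E_t(\cdot)$ becomes a scalar multiple of $1_A$; normality $\Lambda \lhd \Gamma$ guarantees that conjugating by $u_s$ preserves the coset $\Lambda t$, keeping the averaging confined and compatible with the coset structure. Once $E_t(x') = 1_A$ for some $x' \in \cD$, Lemma~\ref{lm:1}(ii) applies: it yields $u_t \in C_A^{\, \alpha_{t^{-1}}}(x')$, and since this convex hull is formed by conjugation with unitaries from $\cA \subseteq \cD$ and $x' \in \cD$, we conclude $u_t \in \cD$. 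This is the genuinely hard part: the previously proven results \SAvP{} (via Lemma~\ref{lm:1}) handle the non-trivial Fourier coefficients one group element at a time, minimality provides the ideal-theoretic rigidity on $\cA$, and normality keeps the coset-by-coset analysis coherent. Assembling these gives $u_t \in \cD$ for all $t \in \Upsilon$, hence $\cA \rtimes_r \Upsilon \subseteq \cD$, and the reverse inclusion is immediate from the definition of $\Upsilon$, completing the bijection.
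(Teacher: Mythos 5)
There is a genuine gap in your part (i), namely the claim that $E(x) \in \cA \cap J$. The conditional expectation maps $\cD$ into $\cA$, hence into $\cD$, but nothing maps it into the ideal $J$: in general $E(J) \not\subseteq J$, and overcoming exactly this difficulty is what the averaging property is for. Indeed, your argument for (i) never uses \SAvP{} at all, and the statement is false without it: take $\Gamma = \Z$ acting trivially on a simple unital \Cs{} $A$ and $\Lambda = \{e\}$ (this $\Lambda$-action is minimal since $A$ is simple); then $A \rtimes_r \Gamma \cong A \otimes C(\T)$ is not simple, so the inclusion is not $C^*$-irreducible. The paper's proof of (i) instead considers the set $E(J) = \{E(y) : y \in J\}$ (not $J \cap \cA$): it is a closed two-sided ideal of $A$ by the identities $aE(y) = E(ay)$ and $E(y)a = E(ya)$, it is $\Lambda$-invariant since $\alpha_s(E(y)) = E(u_s y u_s^*)$ with $u_s \in \cD$, and it is non-zero by faithfulness of $E$; minimality then gives $E(J) = A$, so one can choose $y \in J$ with $E(y) = 1_A$, which is central, and Lemma~\ref{lm:1}~(i) --- this is precisely where \SAvP{} enters --- yields $1_A \in C_A(y) \subseteq J$, the last inclusion holding because $J$ is a closed convex set invariant under conjugation by unitaries of $A \subseteq \cD$.

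In part (ii) your skeleton agrees with the paper's (define $\Upsilon = \{t \in \Gamma : u_t \in \cD\}$ and upgrade ``$E_t(x) \ne 0$ for some $x \in \cD$'' to ``$u_t \in \cD$'' via Lemma~\ref{lm:1}~(ii)), but the mechanism you propose for producing an element of $\cD$ whose $t$-th coefficient is $1_A$ does not work as stated: averaging $x$ by conjugation with unitaries $u_s$, $s \in \Lambda$, only moves coefficients around by automorphisms, since $E_t(u_s x u_s^*) = \alpha_s(E_{s^{-1}ts}(x))$, and convex combinations of such terms cannot turn a non-invertible coefficient into $1_A$. Minimality is a statement about ideals, so you need an ideal to apply it to: the paper shows that the whole set $E_t(\cD) = \{E_t(y) : y \in \cD\}$ is a closed two-sided ideal of $A$ (via $aE_t(y) = E_t(ay)$ and $E_t(y)a = E_t(y\alpha_{t^{-1}}(a))$, legitimate because $A \subseteq \cD$), and that it is $\Lambda$-invariant because $\alpha_s(E_t(y)) = E_t(\alpha_s(y)u_{st^{-1}s^{-1}t})$ and normality of $\Lambda$ puts $st^{-1}s^{-1}t$ in $\Lambda$, whence $E_t(\cD) = A$ exactly, with no approximation needed. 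Finally, your closing assertion that $\cD \subseteq A \rtimes_r \Upsilon$ ``is immediate from the definition of $\Upsilon$'' is not: the upgrading step gives $\mathrm{supp}(x) \subseteq \Upsilon$ for all $x \in \cD$, and the implication $\mathrm{supp}(x) \subseteq \Upsilon \Rightarrow x \in A \rtimes_r \Upsilon$ requires a separate argument, which the paper proves as a stand-alone lemma by cutting with the projections onto $\ell^2(\Upsilon t_i, H)$ for the right cosets of $\Upsilon$, explicitly flagging that this point is less obvious than it looks.
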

 
 \begin{proof}  For the proof of both parts of the theorem we use the following set-up: For each intermediate \Cs{} $A \rtimes_r \Lambda \subseteq D \subseteq A \rtimes_r \Gamma$, for each closed two-sided ideal $J$ of $D$, and for each $t \in \Gamma$, we consider the subset $E_t(J) =  \{ E_t(y) : y \in J\}$ of $A$.
 For each $y \in A \rtimes_r \Gamma$, each $a \in A$, and each $s,t \in \Gamma$ we have the identities:
 \begin{equation} \label{eq:1}
 a E_t(y) = E_t(ay), \qquad E_t(y) a= E_t( y\alpha_{t^{-1}}(a)),
 \end{equation}
 \begin{equation} \label{eq:2}
 \alpha_s(E_t(y)) = u_s E(yu_t^*)u_s^* = E(u_sy u_t^*u_s^*) = E_t(\alpha_s(y)u_{st^{-1}s^{-1}t}).
 \end{equation}
 It is clear that $E_t(J)$ is a closed linear subspace of $A$, and it follows from \eqref{eq:1} that it moreover is a  closed two-sided ideal in $A$. By \eqref{eq:2} and the fact that $D$ and $J$, are $\Lambda$-invariant, because $D$ contains $A \rtimes_r \Lambda$, it follows that $E_t(J)$ is $\Lambda$-invariant for $s \in \Lambda$ if $st^{-1}s^{-1}t \in \Lambda$. This holds in particular when $t = e$.
 
 (i). Let $A \rtimes_r \Lambda \subseteq D \subseteq A \rtimes_r \Gamma$ be an intermediate \Cs. We show that $D$ is simple. Take a non-zero closed two-sided ideal $J$ in $D$. By faithfulness of the conditional expectation $E$ and by the arguments above, $E(J)=E_e(J)$ is a non-zero closed two-sided $\Lambda$-invariant ideal of $A$, hence $E(J)=A$ by minimality of the action $\Lambda \curvearrowright A$.   Accordingly, we can find $y \in J$ with $E(y) = 1_A$. It therefore follows from Lemma~\ref{lm:1} (i) that $1_A \in C_A(y) \subseteq J$, which implies that $J=A$, and we are done.

 (ii). Suppose now that $\Lambda$ is normal in $\Gamma$. Let $A \rtimes_r \Lambda \subseteq D \subseteq A \rtimes_r \Gamma$ be an intermediate \Cs. Let $t \in \Gamma$. As shown above, $E_t(D)$ is a $\Lambda$-invariant closed two-sided ideal in $A$, because $\Lambda$ is normal in $\Gamma$, so either $E_t(D) = 0$ or $E_t(D) = A$ because $\Lambda \curvearrowright A$ is minimal. In the latter case, there exists $y \in D$ such that $E_t(y) = 1_A$. By the assumption that $\Gamma \curvearrowright A$ has \SAvP,  Lemma~\ref{lm:1}~(ii) yields that $u_t \in C_A^{\, \alpha_{t^{-1}}}(y)  \subseteq D$. In summary, we have shown that
 \begin{equation} \label{eq:3}
 E_t(D) \ne 0 \implies E_t(D) = A \implies u_t \in D.
 \end{equation}
 
 Now, let $\Upsilon = \{t \in \Gamma: u_t \in D\}$. It is clear that $\Upsilon$ is a subgroup of $\Gamma$ that contains $\Lambda$, and it is also clear that $A \rtimes_r \Upsilon  \subseteq D$. To prove the reverse implication, let $x \in D$. If $t \in \mathrm{supp}(x)$, then $u_t \in D$ by \eqref{eq:3}, which shows that $\mathrm{supp}(x) \subseteq \Upsilon$. This implies that $x \in A \rtimes \Upsilon$, as  can  be seen using the formula for the standard conditional expectation $A \rtimes_r \Gamma \to A \rtimes_r \Upsilon$, see the lemma below for details, or see  the  proof of  \cite[Theorem 3.5]{CamSmith:Galois}.\footnote{I thank Roger Smith for pointing out that this part of the argument is less obvious than it would seem at first sight.}
 \end{proof}

 \begin{lemma} Let $\alpha \colon \Gamma \curvearrowright A$ be an action of a discrete group $\Gamma$ on a \Cs{} $A$, let $\Upsilon$ be a subgroup of $\Gamma$, and let $x \in A \rtimes_r \Gamma$. Then $x \in A \rtimes \Upsilon$ if and only if $\mathrm{supp}(x) \subseteq \Upsilon$
  \end{lemma}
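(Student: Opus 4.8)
The statement characterizes membership in $A \rtimes_r \Upsilon$ via support. The forward direction is essentially immediate: if $x \in A \rtimes_r \Upsilon$, then writing $x$ as a norm-limit of finite sums $\sum_{t \in \Upsilon} E_t(x) u_t$ (or directly from the construction of the reduced crossed product over $\Upsilon$ sitting inside the one over $\Gamma$), every Fourier coefficient $E_t(x)$ with $t \notin \Upsilon$ must vanish, so $\mathrm{supp}(x) \subseteq \Upsilon$. More carefully, $A \rtimes_r \Upsilon$ embeds in $A \rtimes_r \Gamma$ compatibly with the Fourier coefficients, so an element of $A \rtimes_r \Upsilon$ has $E_t = 0$ for $t \notin \Upsilon$ by definition of the smaller crossed product.

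For the converse, suppose $\mathrm{supp}(x) \subseteq \Upsilon$; I want to conclude $x \in A \rtimes_r \Upsilon$. The key tool is the standard conditional expectation $\Phi \colon A \rtimes_r \Gamma \to A \rtimes_r \Upsilon$, which is determined on Fourier coefficients by the rule that it preserves $E_t(x)$ for $t \in \Upsilon$ and kills the rest. Concretely, $\Phi$ is the unique norm-one projection onto $A \rtimes_r \Upsilon$ satisfying $E_t(\Phi(x)) = E_t(x)$ for $t \in \Upsilon$ and $E_t(\Phi(x)) = 0$ for $t \notin \Upsilon$; its existence follows from the standard theory of reduced crossed products (it is the restriction of the $\Gamma$-expectation composed appropriately, or built directly from the regular representation decomposing along cosets of $\Upsilon$). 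The plan is: first I would invoke the existence of $\Phi$ and record its defining property on Fourier coefficients. Then, given $x$ with $\mathrm{supp}(x) \subseteq \Upsilon$, I would compute $E_t(x - \Phi(x))$ for every $t \in \Gamma$: when $t \in \Upsilon$ the two coefficients agree, and when $t \notin \Upsilon$ both vanish (the first because $t \notin \mathrm{supp}(x)$, the second by definition of $\Phi$). Hence $x - \Phi(x)$ has all Fourier coefficients zero, so $x = \Phi(x) \in A \rtimes_r \Upsilon$ by faithfulness of $E$ (the family $(E_t)_{t \in \Gamma}$ separates points of $A \rtimes_r \Gamma$).

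\textbf{The main obstacle} is making precise the existence and the Fourier-coefficient behavior of the conditional expectation $\Phi \colon A \rtimes_r \Gamma \to A \rtimes_r \Upsilon$, which is exactly the subtlety flagged in the footnote crediting Roger Smith. The cleanest route is to construct $\Phi$ from the spatial picture: realize $A \rtimes_r \Gamma$ on $\ell^2(\Gamma) \otimes H_\tau$ (or the appropriate GNS-type space), decompose $\ell^2(\Gamma) = \bigoplus_{g\Upsilon} \ell^2(g\Upsilon)$ along right cosets, and let $\Phi$ be averaging/compression associated with this decomposition, verifying it lands in $A \rtimes_r \Upsilon$ and has the stated effect on coefficients. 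Alternatively one cites the construction in \cite{CamSmith:Galois}. The verification that $E_t \circ \Phi = E_t$ on $\Upsilon$ and $E_t \circ \Phi = 0$ off $\Upsilon$ reduces to checking it on the dense spanning elements $a u_s$, which is routine once $\Phi$ is pinned down.
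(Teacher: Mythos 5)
Your proposal is correct and takes essentially the same route as the paper: both construct the conditional expectation onto $A \rtimes_r \Upsilon$ by compressing along the decomposition of $\ell^2(\Gamma, H)$ into coset subspaces and then show that it fixes $x$. The only cosmetic differences are that the paper verifies $\mathcal{E}(x)=x$ by a direct computation on vectors $\delta_g \otimes v$ (showing the off-diagonal compressions of $(\pi\times\lambda)(x)$ vanish, rather than invoking injectivity of the Fourier-coefficient map as you do), and that with the left-translation convention for $\lambda$ the $\lambda(\Upsilon)$-invariant subspaces come from \emph{right} cosets $\Upsilon g$, not $g\Upsilon$ as you wrote.
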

  
  \begin{proof} The ``only if'' part is clear. Assume that $\mathrm{supp}(x) \subseteq \Upsilon$. We may assume that $A \subseteq B(H)$, so that $A \rtimes_r \Gamma$ is represented on the Hilbert space $\tilde{H}=\ell^2(\Gamma,H) = \ell^2(\Gamma) \otimes H$ via the covariant representation $\pi \times \lambda$ given by 
  $$\pi(a)(\delta_g \otimes v) = \delta_g \otimes \alpha_{g^{-1}}(a)v, \quad \lambda(t)(\delta_g \otimes v) = \delta_{tg} \otimes v, \qquad g, t \in \Gamma, \; v \in H, \; a \in A.$$
 Write $\Gamma = \bigcup_{i \in I} \Upsilon t_i$ as a disjoint union of right cosets, and let $E_i$ be the projection from $\tilde{H}$ onto $\ell^2(\Upsilon t_i,H)$ (viewed as a subspace of $\tilde{H}$). Let $\Psi$ be the ucp map on $B(\tilde{H})$ given by $\Psi(T) = \sum_{i \in I} E_iTE_i$. We then have a commuting diagram:
  $$ \xymatrix{ A \rtimes_r \Gamma \ar[d]_{\mathcal{E}} \ar[r]^-{\pi \times \lambda} & B(\tilde{H}) \ar[d]^-\Psi \\
  A \rtimes_r \Upsilon  \ar[r]^-{\pi \times \lambda} & B(\tilde{H})}$$
 with $\mathcal{E}$ a conditional expectation.  We show that we have $\mathcal{E}(x) = x$, or, equivalently, that $E_i\, (\pi \times \lambda)(x)\, E_j = 0$, for $i \ne j$. Write $x$ as a formal sum  $x = \sum_{t \in \Gamma} a_t u_t$, and use that the sum $(\pi \times \lambda)(x)= \sum_{t \in \Gamma} \pi(a_t) \lambda(t)$ converges in $\tilde{H}$ on vectors of the form $\delta_g \otimes v$, to get
 $$
 E_i \, (\pi \times \lambda)(x)\, E_j (\delta_g \otimes v) = E_i \, \sum_{t \in \Gamma} \pi(a_t) \lambda(t) (\delta_g \otimes v) = \sum_{t \in \Gamma}  E_i (\delta_{tg} \otimes \alpha_{g^{-1}t^{-1}}(a_t)v)=0,
 $$
 for $g \in \Upsilon t_j$, because if $tg \in \Upsilon t_i$, then $t \notin \Upsilon$, since $i \ne j$, which entails $a_t=0$. 
  \end{proof}
 
 \begin{question} \label{q:A}
 Does (ii) of the theorem above hold also for non-normal subgroups $\Lambda$? 
 \end{question}
 
 \noindent  The question has an affirmative answer  if and only \eqref{eq:3} holds, for all intermediate sub-\Cs s $D$ and for all $t \in \Gamma$.  The second implication in \eqref{eq:3} holds just assuming the action $\Gamma \curvearrowright A$ has \SAvP. Normality of $\Lambda$, and minimality of the action $\Lambda \curvearrowright A$, are used to prove the first implication of \eqref{eq:3}.
 
A possible counterexample to Question~\ref{q:A} could be constructed by finding a dynamical system $\Gamma \curvearrowright A$, a (non-normal) subgroup $\Lambda$ of $\Gamma$, and a family of closed two-sided ideals $\{I_t\}_{t \in \Gamma}$ of $A$ such that $I_t = A$, for all $t \in \Lambda$, and $0 \ne I_t \ne A$, for at least one $t$, and such that
 $$D = \{y \in A \rtimes_r \Gamma: E_t(y) \in I_t, \; \text{for all} \; t \in \Gamma\},$$
 is \Cs. The latter holds if $I_t \, \alpha_t(I_s) \subseteq I_{ts}$ and $\alpha_t(I_{t^{-1}}) = I_{t}$, for all $s,t \in \Gamma$.
 
 Along the same lines, let us notice that the requirement in Theorem~\ref{thm:B} (ii) that the action of $\Lambda$ on $A$ is minimal is necessary. Indeed, if $J$ is a non-trivial $\alpha$-invariant ideal in $A$, consider the set $D$  consisting of all $x \in A \rtimes_r \Gamma$ with $E_t(x) \in J$, for all $t \in \Gamma \setminus \Lambda$. Then $D$ is seen to be an intermediate \Cs, which is clearly not equal to $A \rtimes_r \Upsilon$, for any intermediate group $\Upsilon$.

{\small{
\bibliographystyle{amsplain}

\providecommand{\bysame}{\leavevmode\hbox to3em{\hrulefill}\thinspace}
\providecommand{\MR}{\relax\ifhmode\unskip\space\fi MR }
\providecommand{\MRhref}[2]{%
  \href{http://www.ams.org/mathscinet-getitem?mr=#1}{#2}
}
\providecommand{\href}[2]{#2}

}}

\vspace{1cm}

\noindent
Mikael R\o rdam \\
Department of Mathematical Sciences \\
University of Copenhagen \\ 
Universitetsparken 5, DK-2100, Copenhagen \O \\
Denmark \\
Email: rordam@math.ku.dk\\
WWW: http://web.math.ku.dk/$\sim$rordam/

\end{document}